\numberwithin{equation}{section}
\theoremstyle{plain} 
\newtheorem{thm}{Theorem}
\numberwithin{thm}{section}
\newtheorem*{thm*}{Theorem}
\newtheorem{prop}[thm]{Proposition}
\newtheorem{lemma}[thm]{Lemma}
\newtheorem{coro}[thm]{Corollary}
\theoremstyle{definition}
\newtheorem{definition}[thm]{Definition}
\theoremstyle{remark}
\newtheorem{remark}[thm]{Remark}
\newcommand{\C}{\mathbb{C}}
\renewcommand{\H}{\mathbb{H}}
\newcommand{\Q}{\mathbb{Q}}
\newcommand{\R}{\mathbb{R}}
\newcommand{\Z}{\mathbb{Z}}
\newcommand{\cA}{\mathcal{A}}
\newcommand{\cE}{\mathcal{E}}
\newcommand{\cH}{\mathcal{H}}
\newcommand{\cJ}{\mathcal{J}}
\newcommand{\cK}{\mathcal{K}}
\newcommand{\cL}{\mathcal{L}}
\newcommand{\cM}{\mathcal{M}}
\newcommand{\cN}{\mathcal{N}}
\newcommand{\cO}{\mathcal{O}}
\newcommand{\cP}{\mathcal{P}}
\newcommand{\cR}{\mathcal{R}}
\newcommand{\cT}{\mathcal{T}}
\newcommand{\cV}{\mathcal{V}}
\newcommand{\cX}{\mathcal{X}}
\newcommand{\bL}{\mathbf{L}}
\newcommand{\bV}{\mathbf{V}}
\newcommand{\ba}{\mathbf{a}}
\newcommand{\bb}{\mathbf{b}}
\newcommand{\bc}{\mathbf{c}}
\newcommand{\bd}{\mathbf{d}}
\newcommand{\be}{\mathbf{e}}
\newcommand{\bk}{\mathbf{k}}
\newcommand{\0}{\mathbf{0}}
\newcommand{\1}{\mathbf{1}}
\newcommand{\fe}{\mathfrak{e}}
\newcommand{\fp}{\mathfrak{p}}
\newcommand{\sA}{\mathscr{A}}
\newcommand{\sD}{\mathscr{D}}
\newcommand{\sR}{\mathscr{R}}
\newcommand{\sX}{\mathscr{X}}
\newcommand{\DR}{\textup{DR}}
\newcommand{\Gr}{\textup{Gr}}
\newcommand{\Sp}{\textup{Sp}}
\newcommand{\Dol}{\textup{Dol}}
\title{Kodaira-type Vanishings via Non-abelian Hodge Theory}
\author{Chuanhao Wei}
\begin{document}

\begin{abstract}
    In this paper, we use non-abelian Hodge Theory to study Kodaira type vanishings and its generalizations. In particular, we generalize Saito vanishing using mixed twistor $\sD$-modules. We also generalize it to a Kawamata-Viehweg type vanishing using $\Q$-divisors, and we also prove a relative version for a projective morphism. 
\end{abstract}

\maketitle

\section{Introduction}
Let $X$ be a complex projective manifold. Given a reduced normal crossing divisor 
$D=D_1+...+D_n$ on $X$, and a vector $\bd=(d_1,...,d_n)\in \R^n,$ we set $\bd D=d_1D_1+...+d_nD_n$. We say $\bd\geq \ba$, if $d_i\geq a_i$ for all $i$, similarly defined for other inequalities. We use $\1=(1,...,1)\in \R^n$, similarly for $\0$. We also use $aD$ to denote $aD_1+...+aD_n$, for $a\in \R$. A divisor denoted by a single letter, e.g. $D$, is an effective integral divisor, unless there is a coefficient in front of it, like $\bd D, aD,$ as above, or explicitly stated otherwise. We mainly consider $\Q$-divisors, and the $\R$-divisor case can be reduced to such case by an argument appeared later in Remark \ref{R: R div}.
 
The renowned Kodaira-Akizuki-Nakano Vanishing Theorem has the following generalized form, which is due to Esnault-Viehweg \cite{EV86}, \cite{EV92}, combined with the Kawamata-Viehweg type Vanishing formulation \cite{AMPW}:
\begin{thm}\label{T: Nakano Vanishing}
Notations as above, assume that we have an ample divisor $A$. Let $L$ be an integral divisor, such that
$$L \equiv_{num} aA+\bd D,$$
with $a>0$, $\0\leq\bd\leq \1$.
Then, we have the following vanishings:
\begin{align*}
    H^i(X, \Omega^j_X(\log D)(-D)\otimes \cO_X(L))=0&, \text{for } i+j>\dim(X);\\
    H^i(X, \Omega^j_X(\log D)\otimes \cO_X(-L))=0&, \text{for } i+j<\dim(X).
\end{align*}
\end{thm} 

There is another direction to generalize Kodaira-Akizuki-Nakano Vanishing using Saito's mixed Hodge Module. In this paper, we say a pair $(\cM, F_\bullet)$ \emph{admits a mixed Hodge Module}, where $\cM$ is a \emph{right} coherent $\sD$-module, and $F_\bullet$ is its increasing Hodge filtration, if we can add a weight filtration to make it an algebraic graded polarizable mixed Hodge module in the sense of \cite[\S 2]{Sa90}. 
We remark that, among other things, the existence of the weight compatible polarization on  $(\cM, F_\bullet)$ is essential while defining mixed Hodge Module. However, for the vanishing theory we studying in this paper, we only use its formal property: the strictness of certain functors with respect to $F_\bullet$. Hence we mute those information for the ease of notation.
\begin{thm}\label{T: Saito vanishing}
Let $X$ be a complex projective manifold, with a reduced divisor
$D$ and a semi-ample line bundle $\cL$. Assume further
that $\cL(dD)$ is an ample line bundle, for some $d\geq 0$. Let $(\cM, F_\bullet)$ be a pair admitting a mixed Hodge Module as described above. Then, we have the following vanishings:
\begin{align*}
    \H^i(\DR_X \Gr^F_\bullet\cM[*D]\otimes \cL)&=0, \text{for } i>0;\\
    \H^i(\DR_X \Gr^F_\bullet\cM[!D]\otimes \cL^{-1})&=0, \text{for } i<0.
\end{align*}
\end{thm}
In the case with the absence of $D$, it is just Saito's vanishing \cite[(2.g)]{Sa90}. Please refer \cite[Theorem 20]{W17b} for this general form, with the proof follows Saito's proof of Saito's vanishing. Please note that, in \textit{loc. cit.}, we use $\Sp$ to denote the de Rham functor on right modules. We use the notation $\DR$ here, to follow the convention in \cite{Sab05} and \cite{Moc15}.

In 80s', the non-abelian Hodge Theory is mainly developed by C. Simpson. As the starting point, he studies the harmonic bundles and uses that to give extra structures on cohomologies of a compact K\"ahlar manifold, with the coefficient being a semisimple local system, instead of the constant local system as studied in the classical Hodge theory. Simpson also proposes the Meta theorem which states that what works for classical Hodge theory shall still work for non-abelian Hodge theory. Later on, following Saito's idea of constructing mixed Hodge Modules, the theory of mixed twistor $\sD$-modules is initiated by C. Sabbah. Then, T. Mochizuki completes this spectacular theory in the past decade, which can also be viewed as a satisfying answer towards Simpson's Meta Theorem. In the same vein, heuristically, the vanishing theorem above can be generalized to the corresponding mixed twistor $\sD$-modules setting. Before we state the generalization, let's set up the notations for mixed twistor $\sD$-modules. 

For any complex manifold $X$, we denote $\cX=X\times \C_\lambda$, where $\C_\lambda$ is the affine line with $\lambda$ as its coordinate, and $p:\cX\to X,$ $q:\cX\to \C_\lambda$ the natural projections. Following the notations in \cite{Sab05} and \cite{Moc15}, in this paper, we say that a right algebraic $\sR_{X}$-module $\cM$ on $\cX$ \emph{admits a mixed twistor $\sD$-module}, if there exists \emph{an algebraic graded polarizable mixed twistor $\sD$-module} represented by a filtered $\sR$-triple $(\cT, W)$, with the triple $\cT=(\cM', \cM'', C),$ and $\cM=\cM''$. We also use $\Xi_{\Dol}:=\cM/\lambda\cM$ and $\Xi_{\DR}:=\cM/(\lambda-1)\cM$, which are exact functors from strict coherent $\sR_X$-modules to coherent $\sD_X$-modules and coherent $\sA_X:=\Gr^F(\sD_X)$-modules, respectively \cite[1.1.a, Definition 1.2.1]{Sab05}. As we remarked in the mixed Hodge Module case, the other data while defining a graded polarizable mixed twistor $\sD$-module is important. However, for the vanishing theorem, we only use the holomorphic picture of a mixed twistor $\sD$-module, and its formal property like the strictness of certain natural functors, so we will only keep track the $\sR_X$-module $\cM=\cM''$.

The following vanishing result is a direct non-abelian Hodge theoretic generalization of Theorem \ref{T: Saito vanishing}. 
\begin{thm}\label{T:twistor vanishing}
Let $X$ be a complex projective manifold, with a reduced divisor
$D$ and a semi-ample line bundle $\cL$. Assume further
that $\cL(dD)$ is an ample line bundle, for some $d\geq 0$. Let $\cM$ be a right $\sR_X$-module admitting a mixed twistor $\sD$-module. Then, we have 
the following vanishings:
\begin{align*}
    \H^i(\DR_X \Xi_{\Dol}\cM[*D]\otimes \cL)&=0, \text{for } i>0;\\
    \H^i(\DR_X \Xi_{\Dol}\cM[!D]\otimes \cL^{-1})&=0, \text{for } i<0.
\end{align*}
\end{thm}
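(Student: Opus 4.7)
The plan is to reduce the theorem to a Kodaira-type vanishing for pure polarizable twistor $\sD$-modules, which is available from the non-abelian Hodge correspondence in the form developed by Sabbah and Mochizuki. The reduction proceeds in three steps: a duality argument, a weight-filtration induction, and a cyclic covering. First I would check that the two vanishings are Verdier--Serre dual to each other. Verdier duality of mixed twistor $\sD$-modules exchanges $\cM[*D]$ with $\cM^{\vee}[!D]$; the composition $\Sp_X\circ\Xi_{\Dol}$ is compatible (up to a shift by $\dim X$) with the appropriate duality functor on strict coherent $\sR_X$-modules; and Serre duality on $X$ trades $\cL$ with $\cL^{-1}$ and $\H^i$ with $\H^{-i}$. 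Combining these facts converts the second vanishing for $\cM$ into the first vanishing applied to $\cM^{\vee}$, so it suffices to prove the $[*D]$ statement.

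Next I would reduce to the pure case by inducting on the weight filtration. The weight filtration $W_\bullet\cM$ descends functorially to $\cM[*D]$, with pure polarizable graded pieces $\gr^W_k\cM[*D]$. Since $\Xi_{\Dol}$ is exact on strict coherent $\sR_X$-modules (as recalled in the Introduction) and $\Sp_X$ preserves distinguished triangles, the short exact sequences of the weight filtration give long exact sequences in hypercohomology after tensoring with $\cL$. A short induction on the weight then reduces the claim to the case where $\cM$ underlies a pure polarizable twistor $\sD$-module.

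To treat the pure case, I would use a cyclic covering construction. Since $\cL(dD)$ is ample, for $N\gg 0$ I can choose a smooth divisor $B\in|N\cdot\cL(dD)|$ in general position with respect to $D$ and form the associated degree-$N$ cyclic cover $\pi:Y\to X$. On $Y$ the pullback $\pi^*\cL(dD)$ is an $N$-th power of an ample line bundle, so $\pi^*\cL$ itself differs from an ample line bundle only by a multiple of $\pi^*D$, which can be absorbed into the localization $[*\pi^*D]$. Pulling back the pure twistor $\sD$-module to $Y$, applying the Kodaira--Saito vanishing for pure polarizable twistor $\sD$-modules tensored with an ample line bundle, and pushing down along the finite map $\pi$ then yields the result on $X$, since $\cM[*D]$ is a direct summand of $\pi_*\pi^*\cM[*D]$ via the trace map of the generically \'etale cover. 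The main obstacle will be to verify the compatibility of the localization $[*D]$ with the cyclic cover---one has to choose $B$ sufficiently generically with respect to $D$ so that $\pi^*(\cM[*D])$ agrees with $(\pi^*\cM)[*\pi^*D]$ and no spurious contributions arise from the branch divisor---together with transferring the Kodaira vanishing from the $\sR_X$-level on $\cX$ to the Dolbeault specialization at $\lambda=0$, which uses strictness of the mixed twistor $\sD$-module.
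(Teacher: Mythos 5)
Your outline has a genuine gap at its core: the step ``applying the Kodaira--Saito vanishing for pure polarizable twistor $\sD$-modules tensored with an ample line bundle'' treats as known essentially the statement the theorem is asserting. The non-abelian Hodge correspondence of Sabbah and Mochizuki does not by itself provide such a vanishing; what exists in the literature are special cases for (parabolic) Higgs bundles (Arapura--Hao--Li, Deng--Hao), and producing the general twistor analogue of Saito vanishing is precisely the content of Theorem \ref{T:twistor vanishing}. Your duality and weight-filtration reductions are plausible (and the weight reduction is in fact unnecessary), but they only defer the problem to a base case for which you supply no argument. The missing mechanism --- the replacement for Hodge-to-de~Rham degeneration --- is the heart of the matter: one resolves $\cM[!D]\otimes\cL^{-1}$ by a chain of generic hyperplane sections $Y_1\supset Y_2\supset\cdots$ cut out by $|\cL^m|$, observes that ampleness of $\cL(dD)$ makes the complements $Z_j=Y_j\setminus(dD|_{Y_j}+Y_{j+1})_{\red}$ affine, applies Artin vanishing on the de~Rham side, and then uses that $p_+$ of a module underlying a mixed twistor $\sD$-module is $\cO_{\C_\lambda}$-free, so that the Dolbeault hypercohomology has the same dimension as the de~Rham hypercohomology. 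None of this appears in your proposal, and without it (or an analytic substitute) the argument does not close.

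A secondary problem is the covering step itself. Taking a cyclic cover $\pi:Y\to X$ branched along a generic $B\in|N\cdot\cL(dD)|$ does not make $\pi^*\cL$ ample up to something harmless: one still has $\pi^*\cL=\pi^*(\cL(dD))\otimes\cO_Y(-d\,\pi^*D)$, and the twist by $\cO_Y(-d\,\pi^*D)$ cannot simply be ``absorbed into the localization $[*\pi^*D]$'' --- twisting a localized $\sR$-module by a multiple of $D$ shifts its Kashiwara--Malgrange filtration and changes which prolongation one is looking at. Identifying such a twist with a module that again underlies a mixed twistor $\sD$-module is exactly the content of Lemma \ref{L:cyclic computation saito's version}, and it requires a covering branched along the relevant divisor together with control of the parabolic structure, not a covering branched along a generic $B$ disjoint from the issue. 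In the paper the Esnault--Viehweg covering is used for this purpose (branched along the generic sections $Y_{j+1}$ of $\cL^m$, to show that $\cM[*Y_{j+1}]\otimes p^*\cL^{-1}$ still underlies a mixed twistor $\sD$-module), not to improve the positivity of $\cL$.
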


In the previous theorem, $\cM[*D]$ and $\cM[!D]$ are prolongations of $\cM$ as coherent $\sR_{X}$-modules as in \cite[3.1.2, 3.1.3]{Moc15}. We will use $\cM(*D)$ to denote the canonical prolongation as a coherent $\sR_{X(*D)}$-module \cite[3.1.1]{Moc15}.

Naturally, we want to give the previous vanishing a Kawamata-Viehweg Vanishing type formulation, as in Theorem \ref{T: Nakano Vanishing}. The next vanishing theorem can be viewed as a such generalization, and it seems to be new even in the setting of mixed Hodge Modules. The formulation is quite technical, and we will define the notations in \S3. At this stage, for reader's convenience, we introduce the notation
$$\mathbf{V}^D_{\mathbf{a}}\mathcal{M}^{(\alpha)}(*D)=\cap V^{D_i}_{a_i} \mathcal{M}^{(\alpha)}(*D),$$
where $V^{D_i}_{\bullet} \mathcal{M}^{(\alpha)}(*D)$ is the KM-filtration of $\mathcal{M}(*D)$ locally around $\lambda=\alpha$. Please note that the multi-indexed filtration $\mathbf{V}^D_{\bullet}$ does not behave well in general. We need the $V$-compatibility on $\cM(*D)$, Definition \ref{D:V strict}.
\begin{thm}\label{T:KV vanishing}
Let $X$ be a smooth projective variety, with a reduced, normal crossing divisor $D$. Let $\cM$ be a right $\sR_X$-module admitting a mixed twistor $\sD$-module, and assume that $\cM(*D)$ is \emph{$V$-compatible with respect to $D$}. (In particular, it is the case when $\cM$ corresponds to a tame harmonic bundle on $X\setminus D$). Separate $D$ into two groups of components: $B+C=D$, and assume that we have a semi-ample divisor $A$, such that $A+\be C$ is ample for some $\be\geq \0$. Let $L$ be a divisor, such that
$$L \equiv_{num} aA+\bb B +\bc C,$$
with $a>0$.
Then, we have the following vanishings:
\begin{align*}
    \H^i(\DR_{(X,D)}(\Xi_{\Dol}\bV^B_{<-\bb} \bV^{C}_{-\bc} \cM^{(0)}(*D))\otimes \cO_X(L))=0&, \text{for } i>0;\\
    \H^i(\DR_{(X,D)}(\Xi_{\Dol}\bV^B_{\bb} \bV^C_{<\bc} \cM^{(0)}(*D))\otimes \cO_X(-L))=0&, \text{for } i<0.
\end{align*}
\end{thm}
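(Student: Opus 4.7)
The overall strategy is to reduce Theorem~\ref{T:KV vanishing} to Theorem~\ref{T:twistor vanishing} by a Kawamata-style Kummer cover that clears the $\Q$-denominators carried by the $V$-filtration indices $\bb$ and $\bc$. Fix $N$ divisible enough so that $N\bb, N\bc\in\Z^n$, and construct a finite Galois cover $\pi:Y\to X$ ramified to order $N$ along each irreducible component of $D=B+C$. By the classical Kawamata covering construction, one may arrange that $Y$ is smooth, $\pi^{-1}D$ is a reduced normal crossing divisor, and the Galois group $G$ is a quotient of $(\Z/N\Z)^n$. On $Y$, form the pullback MTM associated to $\cM(*D)$; this is again a graded polarizable MTM on $Y$, and the $V$-compatibility hypothesis propagates along $\pi^{-1}D$ via the functoriality results of \cite{Moc15}. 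The crucial point is that, under Kummer pullback of order $N$, a $V$-index $\alpha$ downstairs corresponds to the integer index $N\alpha$ upstairs, so the fractional-index piece $V^B_{<-\bb}V^C_{-\bc}\cM^{(0)}(*D)$ on $X$ lifts to an integer-indexed $V$-piece on $Y$, and moreover is recovered on $X$ as the $G$-isotypic summand of $\pi_*$ of that upstairs piece, for the character determined by the fractional parts of $-\bb$ and $-\bc$.

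Next, verify the ampleness hypothesis needed to apply Theorem~\ref{T:twistor vanishing} on $Y$. Since $A$ is semi-ample on $X$, its pullback $\pi^*A$ is semi-ample on $Y$; so after shifting by an appropriate integer divisor supported on $\pi^{-1}D$ (which is absorbed by the $[*\pi^{-1}D]$-prolongation) one may take $\cL_Y := \pi^*\cO_X(aA)$ as the semi-ample line bundle on $Y$. Using $a>0$ together with the ampleness of $A+\be C$, one checks that $\cL_Y\otimes\cO_Y(d\pi^{-1}D)$ is ample on $Y$ for $d\gg 0$, via the decomposition
\[
a\pi^*A + d\pi^{-1}D \;\equiv_{num}\; a\pi^*(A+\be C) + d\,B_Y + (d-aN\be)\,C_Y,
\]
where $B_Y,C_Y$ denote the reduced preimages and $a\pi^*(A+\be C)$ is ample as the pullback of an ample class by a finite map. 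Theorem~\ref{T:twistor vanishing}, applied to the integer-indexed $V$-piece on $Y$ twisted by $\cL_Y$, then yields the desired vanishing upstairs.

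Finally, descend. Because $\pi$ is finite, $\pi_*$ is exact and commutes with $\H^i$, so the vanishing on $Y$ transfers to the vanishing of each $G$-isotypic summand of the pushforward on $X$. In particular, the summand indexed by the fractional parts of $-\bb,-\bc$ is precisely $\Sp_{(X,D)}\bigl(\Xi_{\Dol}V^B_{<-\bb}V^C_{-\bc}\cM^{(0)}(*D)\bigr)\otimes\cO_X(L)$, giving the first vanishing. The second vanishing, for $V^B_{\bb}V^C_{<\bc}$ twisted by $\cO_X(-L)$, follows by the parallel construction using the $[!D]$-prolongation and the complementary isotypic piece, or alternatively by a twistor-duality argument exchanging $[*D]$ and $[!D]$ and flipping the sign of $L$.

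The main obstacle lies in the first paragraph. Kummer pullback is straightforward for MTMs of strict normal crossing type, but here we are only given the weaker $V$-compatibility along $D$, so one must carefully verify that the pullback remains a graded polarizable MTM on $Y$, track how the $V$-filtration transforms under ramified pullback in this generality, and, most importantly, identify the $G$-eigenspace of $\pi_*$ of the Dolbeault--Spencer complex with exactly $\Sp_{(X,D)}\Xi_{\Dol}V^B_{<-\bb}V^C_{-\bc}\cM^{(0)}(*D)$. This compatibility is precisely what the $V$-compatibility hypothesis is designed to guarantee, following \cite{Moc15}.
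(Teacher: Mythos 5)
Your overall strategy --- clear the denominators of $\bb,\bc$ by a covering trick and reduce to Theorem \ref{T:twistor vanishing} --- is the right one, and is broadly the paper's, but as written the argument has genuine gaps at exactly the points you defer.

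First, and most concretely: Theorem \ref{T:twistor vanishing} is a statement about $\Sp_X\Xi_{\Dol}\cM[*D]$ and $\Sp_X\Xi_{\Dol}\cM[!D]$, i.e.\ about the full Spencer complex of a prolongation. The object in Theorem \ref{T:KV vanishing} is the \emph{log}-Spencer complex $\Sp_{(X,D)}$ of a $V$-filtration piece. You cannot ``apply Theorem \ref{T:twistor vanishing} to the integer-indexed $V$-piece'' upstairs without a bridge between these two kinds of objects. That bridge is the logarithmic comparison (Proposition \ref{P:log-comparison}): after the covering construction identifies $V^B_{\bb}V^C_{<\bc}\cM(*D)\otimes p^*\cL^{-1}$ with $V^B_{\0}V^C_{<\0}\cM_1$ (Lemma \ref{L:cyclic computation with V}), one still needs $\Sp_{(X,D)}V^B_{\0}V^C_{<\0}\cM_1 \simeq \Sp_X\,\cM_1[*B+!C]$ before Theorem \ref{T:twistor vanishing} is applicable. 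This step is entirely absent from your proposal, and it is where the $V$-compatibility hypothesis is actually consumed.

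Second, the hypothesis is only $L\equiv_{num} aA+\bb B+\bc C$, so there is a topologically trivial line bundle $\cP$ with $\cO(nL)\otimes\cP\simeq\cO(n(aA+\bb B+\bc C))$ obstructing any direct identification of $\cO_X(L)$ with what the covering construction produces. The paper removes $\cP$ by a Bloch--Gieseker cover (non-characteristic, unramified along $D$) and then absorbs the root of $\cP$ into $\cM$ as a rank-one unitary harmonic bundle; your proposal never addresses the numerical-versus-linear issue, and your $\cL_Y=\pi^*\cO_X(aA)$ is not even an integral twist in general.

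Third, your cover is ramified along $D$ itself, where $\cM$ may be singular, so the ``pullback MTM'' on $Y$ is not produced by any non-characteristic pullback functor, and the claim that $V$-compatibility ``propagates'' is exactly the unproved content. The paper sidesteps this: the cyclic cover is only used through its \'etale part, by pulling back $\cM$ over $X\setminus D$, prolonging, and pushing forward; the essential object is the eigen-summand $\cM_\delta=i_+(\cM^0\otimes p^*\cL^{-(\delta)}|_{X^0})$, which lives on $X$, together with the explicit computation of its multi-indexed KM-filtration in \S4. You correctly flag this as ``the main obstacle,'' but flagging it is not resolving it; combined with the missing logarithmic comparison, the proposal does not yet constitute a proof.
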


\begin{remark}\label{R:shift V fil}
Here, we consider the multi-indexed KM-filtration on the canonical prolongation $\cM(*D)$ as a coherent $\sR_{X}(*D)$-module. Due to the $V$-compatibility in Defintion \ref{D:V strict}, for any $\alpha\in \C_\lambda$ and $\bk\in \Z^n$, it satisfies 
$$\bV^D_{\bd+\bk} \cM^{(\alpha )}(*D)=\bV^D_{\bd}\cM^{(\alpha )}(*D)\otimes p^*\cO_X(\bk D).$$
Hence we can always shift the indexes of the multi-indexed KM-filtration to $\leq \0$, and use the $V$-filtration on $\cM[*D]$ by \cite[Lemma 3.1.1]{Moc15}. In particular, we have that, when $\bd\leq \0$,
$$\bV^D_{\bd} \cM^{(\alpha)}(*D)=\bV^D_{\bd} \cM^{(\alpha)}[*D].$$
\end{remark}

\begin{remark}
Actually, we only need to show the case that $\be=\0$, i.e. $A$ itself is ample, since we always have
$$L \equiv_{num} aA+\epsilon\be C +\bb B +(\bc-\epsilon\be) C,$$
noting that $ aA+\epsilon\be C $ is ample, and $\bV^{C}_{-\bc}=\bV^{C}_{-\bc+\epsilon\be}$, $\bV^{C}_{<\bc}=\bV^{C}_{<\bc-\epsilon\be}$, for some $0<\epsilon\ll 1$, due to the semi-continuity of the KM-filtration.

The reason that we keep the superficially more general form as stated above, is due to that it actually carries natural geometric information that will be clarified in the proof. Furthermore, following the same proof as in \S5, Theorem \ref{T:KV vanishing} can be further generalized to the form that assume $A+\be (C+E)$ is ample for some effective divisor $E$, and replace $\cM(*D)$ in the first (resp. second) vanishing by $\cM[*E](*D)$ (resp. $\cM[!E](*D)$), as in Theorem \ref{T:twistor vanishing}. 
\end{remark}

\begin{remark}\label{R: R div}
The above theorem also works for $\R$-divisors by the following reduction. Due to the previous remark, we can assume $A$ is ample. Hence, we can find $\0<\bb''\ll \1$, and  $\0<\bc''\ll \1$, so that $aA-\bb''B+\bc''C$ is still ample, and both $\bb':=\bb+\bb''$ and $\bc':=\bc-\bc''$ are rational. Hence, we have
$$L \equiv_{num} (a'A-\bb''B+\bc''C) +\bb' B +\bc' C,$$
with $a'A-\bb''B+\bc''C$ be a $\Q$-ample divisor. We also can assume $\bV^B_{<-\bb} \bV^{C}_{-\bc}=\bV^B_{<-\bb'} \bV^{C}_{-\bc'}$ and $\bV^B_{\bb} \bV^C_{<\bc}=\bV^B_{\bb'} \bV^C_{<\bc'}$, due to the semi-continuity of the KM-filtration. From now on, we only consider $\Q$-divisors, unless explicitly stated otherwise.
\end{remark}

We will also establish the relative versions of the previous two vanishing theorems in \S 2. Actually, it is not straight-forward to get such generalization as e.g. \cite[Theorem 1-2-3]{KMM}, using a log-smooth compactification and Serre vanishing on coherent $\cO$-modules. This is because, in our case, we are not working in the derived category of coherent sheaves. However, due to our proof of Theorem \ref{T:twistor vanishing} being functorial, we can also get a functorial proof of its relative version, Theorem \ref{T:relative twistor vanishing}, which does not use a log-smooth compactification. We then show a Nadel-type vanishing with multiplier-ideals from $\Q$-divisors, and state an effective global generalization theorem, as an application. 

In \S3, we recall the notion of multi-indexed Kashiwara-Malgrange (KM) filtration, its compatibility, and using it to get the logarithmic comparison in the setting of $\sR$-modules as in \cite{W17a}. In \S4, we recall the geometric construction in \cite{EV92}, which is constructing a cyclic cover from a normal crossing $\Q$-divisor, and study the induced local systems and their multi-indexed KM filtration. We give the proofs of our main vanishing results in \S5.

It is also natural to generalize the injectivity/surjectivity results in \cite[\S 5]{EV92}, \cite{Wu17}, \cite{Wu21}, to the Non-abelian Hodge setting. However, the author finds such type of results have a different nature of approach, and they cannot directly imply those vanishing results above using the method summerized in  \cite[\S 1, 2. proof]{EV92}, with the help of Serre Vanishing, for the same reason in the relative vanishing case. Hence, the author decides to leave it for another occasion.

Let's explain how the previous two vanishing theorems cover essentially all Kodaira-type vanishings in algebraic geometry we know so far. Nevertheless, the author has no intention to say that we have new proofs to those brilliant results. On the contrary, the vanishing theorems above shall be taken as a natural summary of currently known Kodaira-type vanishings, and the proofs are based on the proofs of those results. The proofs mainly adopt Esnault and Viehweg's geometric construction \cite{EV92}, Saito's proof of Saito Vanishing, and the Theory of mixed twistor $\sD$-modules, due to Sabbah and Mochizuki. However, we still cannot recover Nadel vanishing in the analytic setting, that is proved using $L^2$ method. 

We first restrict ourselves to the case that $\cM$ being the Rees algebra associated to a mixed Hodge Module $(\cN, F_\bullet)$, with a right $\sD$-module $\cN$, and an increasing Hodge filtration $F_\bullet$. We have that $\Xi_{\Dol}\cM:=\cM/\lambda\cM$ is just taking the associated graded pieces, $\Gr^F_\bullet \cN$. Then, Theorem \ref{T:twistor vanishing} in this case is just Theorem \ref{T: Saito vanishing}.

For Theorem \ref{T:KV vanishing}, if we only consider $\cM$ admits a graded polarizable variation of mixed Hodge structures on $X$, $C=0$, and $\0\leq \bb<\1,$ we have $V^B_{-\bb} \cM(*D)\simeq \cM$. In particular,  if we take $\cM\simeq \omega_\cX$, i.e. it corresponds to the trivial variation, we get the Kawamata-Viehweg vanishing for klt pairs. Hence, via a standard argument, e.g. Corollary \ref{C:Nadel V}, it implies the big and nef vanishing for the lowest graded piece of a mixed Hodge Module, which has been proved by Suh \cite{Suh18} and Wu \cite{Wu17} independently. Let's also remark that the $C$ part can be useful in case we want to deal with log-canonical singularities, e.g. the application on showing the zero locus of holomorphic log-one forms, \cite{W17b}.

If we have a reduced, possibly singular Cartier divisor $D$, by taking a log resolution $f:(X', D')\to (X, D)$, with $f^*dD=\bd' D'$. We have the multiplier ideal
\begin{equation}\label{E:from V to J }
    \cJ(dD)\otimes \omega_X\simeq f_*\Xi_{\Dol}\bV^{D'}_{<-\bd'}\omega_{\cX'}(*D').
\end{equation}
Hence we get the Nadel vanishing with the multiplier ideal of $\Q$-divisors. Such relation between multiplier ideals and Hodge modules has also been studied in \cite{BS05} and \cite{MP20}. See also Corollary \ref{C:Nadel V} in the next section.

In the case that $\cM$ is tame and underlies a variation of twistor structures on $X\setminus D$, this is equivalent to that $\Xi_\Dol \cM$ gives us a slope parabolic polystable higgs bundle on $X\setminus D$, due to \cite[Theorem 1.4]{Moc06}. Since a parabolic semistable higgs bundle can be realized as a sequentially extension of stable higgs bundles, as argued in \cite[Lemma 7.1]{AHL}, then the two vanishings above generalize those vanishing results proved by Arapura-Hao-Li \cite{AHL}, and by Deng-Hao \cite{DH}. In \cite{DH}, their vanishing is more refined in the sense that they build a relation between the range of the vanishing degrees and the number of positive eigenvalues of the curvature form of $\cL$. In the case that $\cL$ being $k$-ample as in \cite[\S 1]{Som78}, (which is a stronger condition, see also \cite{Tot13},) we have a projective dominate morphism $f:X\to Y$, with $\cL^n=f^*\cA$ for some ample line bundle $\cA$ on $Y$, and $n\in \Z^+$. Hence we can achieve the vanishing with the expected range as in \cite{DH}, due to the compatibility of the direct image, $\DR$ and $\Xi_{\Dol}$ functor. We leave the details to the interested readers.

In this paper, we use $\otimes$ (without sub-index) to denote the tensor product over the corresponding structure sheaf $\cO$, unless specified otherwise. We use $f_+$ and $f_{\dagger}$ (resp.  $f_*$ and $f_!$) to denote the \emph{derived} direct image and direct image with proper support in the derived category of $\sR$-modules (resp. quasi-coherent sheaves or constructable sheaves). We use $\H^\bullet$ (resp. $\H^\bullet_c$) to denote taking the hypercohomoloy (resp. hypercohomoloy with proper support), which is the same as the (derived) functor of taking derived direct image $a_*$ (resp. $ a_!$), where $a$ is the canonical map to $\text{Spec}(\C)$. We consider \emph{right} $\sD$-modules or $\sR$-modules by default, unless specified otherwise. Although we only consider the vanishing theory for algebraic mixed twistor $\sD$-modules on a quasi-projective variety, the constructions in \S3 and \S4 work for the analytic setting.

\section{Relative Vanishings and applications}
In this section, we state the relative vanishings, a generalized nef and big vanishing, and an effective global generalization theorem, as an application. Most notations shall be standard, following \cite{Sab05} and \cite{Moc15}. Some notations will be carefully defined in the next section.

Let's first state a relative version of Theorem \ref{T:twistor vanishing}.
\begin{thm}\label{T:relative twistor vanishing}
Let $f:X\to S$ be a projective morphism between smooth quasi-projective varieties. On $X$, assume that we have a reduced divisor $D$ and an $f$-semi-ample line bundle $\cL$. Assume further that $\cL(dD)$ is an $f$-ample line bundle, for some $d\geq 0$. Let $\cM$ be a right $\sR_X$-module admitting a graded polarizable mixed twistor $\sD$-module on $X$. Then, 
we have the following vanishings:
\begin{align*}
    \cR^if_*(\DR_X \Xi_{\Dol}\cM[*D]\otimes \cL)&=0, \text{for } i>0;\\
    \cR^if_*(\DR_X \Xi_{\Dol}\cM[!D]\otimes \cL^{-1})&=0, \text{for } i<-d,
\end{align*}
where $d=\dim S$.
\end{thm}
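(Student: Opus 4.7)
The plan is to deduce Theorem \ref{T:relative twistor vanishing} from the absolute Theorem \ref{T:twistor vanishing} by twisting with an ample line bundle pulled back from a projective compactification of the base, and then invoking the Leray spectral sequence together with Serre vanishing. Since the vanishing of $\cR^if_*$ is local on $S$, I would fix a projective compactification $S \hookrightarrow \bar S$ and an ample line bundle $\cH$ on $\bar S$, still denoted $\cH$ upon restriction to $S$. Writing $\cL_k := \cL\otimes f^*\cH^k$, for $k\gg 0$ this line bundle is semi-ample on $X$ with $\cL_k(dD)$ ample. The crucial point, built into the author's functoriality remark, is that the proof of Theorem \ref{T:twistor vanishing} does not involve extending the mixed twistor $\sD$-module $\cM$ to a log-smooth compactification of $X$, so it applies in this quasi-projective setting as soon as the line bundle in play is honestly ample.

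Applying the absolute theorem to $\cL_k$, combined with the projection formula and the Leray spectral sequence, yields
\[
E_2^{p,q}=\H^p\l(\bar S,\cR^qf_*\l(\Sp_X\Xi_\Dol\cM[*D]\otimes \cL\r)\otimes \cH^k\r)\ \Rightarrow\ \H^{p+q}\l(X,\Sp_X\Xi_\Dol\cM[*D]\otimes \cL_k\r),
\]
together with the analogous sequence for $[!D]$ and $\cL^{-1}\otimes \cH^{-k}$. For the first vanishing, positive-twist Serre vanishing on $\bar S$ kills $E_2^{p,q}$ with $p>0$ for $k\gg 0$, and the absolute vanishing $\H^i(X,\cdots\otimes\cL_k)=0$ for $i>0$ then forces $\H^0(\bar S,\cR^qf_*(\cdots)\otimes\cH^k)=0$ for $q>0$; global generation of $\cR^qf_*(\cdots)\otimes \cH^k$ for $k\gg 0$ yields the desired sheaf-level vanishing. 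For the second, Serre duality gives $\H^p(\bar S,\cG\otimes\cH^{-k})=0$ for $p<d:=\dim \bar S$ and $k\gg 0$, so the spectral sequence collapses to the $p=d$ column; the absolute vanishing $\H^i(X,\cdots\otimes\cL_k^{-1})=0$ for $i<0$ then produces $\H^d(\bar S,\cR^qf_*(\cdots)\otimes\cH^{-k})=0$ for $q<-d$, which after a further Serre-duality argument yields the claimed vanishing, the numerical shift by $-d$ matching $\dim S$ exactly as in the statement.

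The main obstacle I expect is the first step, namely justifying that the proof of Theorem \ref{T:twistor vanishing} extends to the quasi-projective setting when twisted by an ample pullback from the base. The author's functoriality remark is meant to cover this, but concretely it requires tracing through the Esnault--Viehweg cyclic cover and the mixed twistor $\sD$-module apparatus of \S3--\S5 to verify that projectivity of $X$ enters only through the ampleness of the twisting line bundle. A secondary subtlety arises in the second vanishing: the $p=d$ degeneration produces a top-cohomology vanishing on $\bar S$, and upgrading this to an honest sheaf-level vanishing of $\cR^qf_*(\cdots)$ on $S$ (rather than merely a torsion condition) requires exploiting the structure of direct images of mixed twistor data --- most likely via generic local freeness of $\cR^\bullet f_*$ under a projective morphism, or by combining the two absolute vanishings of Theorem \ref{T:twistor vanishing} simultaneously.
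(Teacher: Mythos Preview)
Your approach is the classical ``compactify-and-Serre-vanish'' strategy, and it is \emph{not} the route the paper takes; moreover, the paper explicitly warns in the Introduction that this route does not go through here (``it is not that straight-forward to get the relative version generalization \ldots\ using a log-smooth compactification and Serre vanishing on coherent $\cO$-modules'').

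The paper's proof instead \emph{relativizes every step} of the absolute argument. After adjusting $\cL$ by the pullback of an ample bundle so that it is globally semi-ample, it builds the same NCNC filtration $X=Y_0\supset Y_1\supset\cdots$ and the same exact sequence of Higgs sheaves as in Theorem \ref{T:twistor vanishing}. The point is that $f$-ampleness of $\cL(dD)$ makes each $g_j:Z_j\to S$ an \emph{affine morphism}, so relative Artin vanishing gives $\cH_p^k g_{j,!}(\cdots)=0$ for $k<0$. One then works locally on an affine $S$, combines this with a second application of Artin vanishing via the Grothendieck--Leray spectral sequence for $a\circ f$, and finally uses that $a_*=a_![d]$ is exact and fully faithful on an affine $S$ to pass from hypercohomology vanishing to sheaf-level vanishing of $\cR^kf_*$. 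No compactification of $X$ or $S$ enters.

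Your proposal has a genuine gap exactly where you suspect. You want to invoke Theorem \ref{T:twistor vanishing} on $X$ with the line bundle $\cL_k$, but Theorem \ref{T:twistor vanishing} is stated for $X$ smooth \emph{projective}, and here $X$ is only quasi-projective. Your reading of the ``functoriality'' remark is backwards: that remark does not say the absolute proof extends verbatim to quasi-projective $X$; it says the \emph{construction} in the proof (the NCNC filtration and the resulting exact sequence) is functorial enough to be carried out relative to $S$, which is precisely what the paper then does. To run your argument you would need to compactify $X$ to some $\bar X$ over $\bar S$ and extend $\cM$ across $\bar X\setminus X$; but there is no canonical choice of extension as a mixed twistor $\sD$-module compatible with $[*D]$ and $[!D]$, and any choice alters the object whose vanishing you are trying to prove. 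A second, related issue is that your Leray spectral sequence is written on $\bar S$, but $\cR^qf_*(\cdots)$ lives on $S$; its extension by zero to $\bar S$ is typically not coherent, so neither Serre vanishing for $\cH^k$ nor Serre duality for $\cH^{-k}$ applies to it. The ``secondary subtlety'' you flag in the $[!D]$ case is a symptom of the same problem, and the paper's shift by $-d$ arises for a different reason, namely the identity $a_*=a_![d]$ on an affine $S$.
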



We can also show a relative version of Theorem \ref{T:KV vanishing}:
\begin{thm}\label{T:relative KV vanishing}
Fix a projective morphism between smooth quasi-projective varieties $f:X \to S$.
$D$ is a reduced, normal crossing divisor on $X$. Let $\cM$ be a right $\sR_X$-module admitting \emph{a graded polarizable mixed twistor $\sD$-module} on $X$, and assume that $\cM(*D)$ is \emph{$V$-compatible with respect to $D$}. Separate $D$ into two groups of components: $B+C=D$, and assume that we have an $f$-semi-ample divisor $A$, such that $A+\be C$ is $f$-ample for some $\be\geq \0$. Let $L$ be a divisor, such that
$$L \equiv_{lin} aA+\bb B +\bc C,$$
with $a>0$.
Then, we have the following vanishings:
\begin{align*}
    \cR^if_*(\DR_{(X,D)}(\Xi_{\Dol}\bV^B_{<-\bb} \bV^{C}_{-\bc} \cM^{(0)}(*D))\otimes \cO_X(L))=0&, \text{for } i>0;\\
    \cR^if_*(\DR_{(X,D)}(\Xi_{\Dol}\bV^B_{\bb} \bV^C_{<\bc} \cM^{(0)}(*D))\otimes \cO_X(-L))=0&, \text{for } i<-d,
\end{align*}
where $d=\dim S$.
\end{thm}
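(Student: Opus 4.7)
The plan is to adapt the proof of the absolute Theorem \ref{T:KV vanishing} to the relative setting, using Theorem \ref{T:relative twistor vanishing} in place of Theorem \ref{T:twistor vanishing} as the key input. This adaptation is feasible because the geometric ingredients developed in \S3 and \S4 --- the multi-indexed Kashiwara-Malgrange filtration on $\cM^{(0)}(*D)$ and its compatibility with the Esnault-Viehweg cyclic cover construction --- are local on $X$ near $D$, and so remain valid for an arbitrary projective morphism $f:X\to S$ without needing any log-smooth compactification.

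First, I would apply the cyclic cover construction of \S4 to the $\Q$-divisor $\bb B + \bc C$, using the linear equivalence $L\equiv_{lin}aA+\bb B+\bc C$ (which supplies genuine global sections) to build a finite Galois cover $\pi:Y\to X$, followed by a resolution of singularities if necessary. Pulling back $\cM^{(0)}(*D)$ to $Y$ and decomposing the multi-indexed $V$-filtration under the characters of the cyclic Galois group, the specific pieces $\Xi_{\Dol}V^B_{<-\bb}V^C_{-\bc}\cM^{(0)}(*D)\otimes\cO_X(L)$ and $\Xi_{\Dol}V^B_{\bb}V^C_{<\bc}\cM^{(0)}(*D)\otimes\cO_X(-L)$ should emerge as Galois eigen-summands of a single integer-indexed object on $Y$, twisted by a line bundle whose positivity absorbs the fractional data. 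This is precisely the absorption step carried out in the absolute proof of Theorem \ref{T:KV vanishing}.

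Second, the composition $f\circ\pi:Y\to S$ is again a projective surjective morphism between smooth quasi-projective varieties, and the $f$-semi-ampleness of $A$ together with the $f$-ampleness of $A+\be C$ pull back to the corresponding $(f\circ\pi)$-relative positivity properties for $\pi^*A$ and $\pi^*(A+\be C)$. Hence Theorem \ref{T:relative twistor vanishing} applies on $Y$ to the integer-indexed object, yielding $\cR^i(f\circ\pi)_*=0$ in the claimed ranges $i>0$ and $i<-d$. Since $\pi$ is finite we have $\cR^i(f\circ\pi)_*=\cR^if_*\circ\pi_*$, and the desired vanishing on $X$ follows by extracting the appropriate Galois eigensheaf summand.

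The principal technical obstacle will be the careful bookkeeping of the multi-indexed $V$-filtrations on $Y$ and their identification with the characters of the cyclic cover: one must verify that the half-open ranges $V^B_{<-\bb}V^C_{-\bc}$ and $V^B_{\bb}V^C_{<\bc}$ line up with the correct eigensheaf under the Galois action, and that the $V$-compatibility hypothesis on $\cM[*D]$ is preserved (in the appropriate sense) under pullback along $\pi$. This is exactly the content of the logarithmic comparison developed in \S3 and \S4; since it is a statement local on $X$ near the normal crossing divisor $D$, it transports verbatim from the absolute to the relative case, and the functorial nature of the proof of Theorem \ref{T:relative twistor vanishing} then closes the argument.
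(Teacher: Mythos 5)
Your proposal takes essentially the same route as the paper: the paper's proof simply reuses the identity (\ref{E:V twist L id with log comp}) from the absolute case --- obtained via the Esnault--Viehweg cyclic cover, Lemma \ref{L:cyclic computation with V}, and the logarithmic comparison (Proposition \ref{P:log-comparison}), all of which are local on $X$ and hence unaffected by passing to the relative setting --- and then applies Theorem \ref{T:relative twistor vanishing}. The only cosmetic differences are that you propose to apply the relative vanishing upstairs on the cover and push down by the finite map (the paper applies it on $X$ directly to the eigensheaf summand $\cM_1[*B+!C]$, which avoids your unnecessary ``resolution of singularities'' step, since the Esnault--Viehweg cover of a normal crossing divisor is already smooth), and that the hypothesis $\equiv_{lin}$ in the relative statement makes the Bloch--Gieseker reduction of Step 1 superfluous, as you implicitly use.
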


Recall that, due to Hitchin-Kobayashi correspondence, we have that, for a stable vector bundle $\cE$ on a smooth projective variety $X$ with vanishing Chern classes, we can view it as a Higgs bundle with the trivial Higgs map, that can be lift as a harmonic bundle corresponding to a unitary representation. A semistable vector bundle $\cN$ with vanishing Chern classes can be realized as extensions of stable vector bundle $\cE$ with vanishing Chern classes, \cite[Theorem 2]{Sim91}. Hence $\cN$, equipped with the trivial higgs map, can be lift as a smooth variation of mixed twistor structures.

Then, we can show the following Nadel vanishing on a semistable vector bundle with vanishing Chern classes.

\begin{coro}\label{C:Nadel V}
Let $X$ be a smooth projective variety, $\cN$ a semistable vector bundle on $X$ with vanishing Chern classes. Given a divisor $L$, such that 
$$L\equiv_{num} aA+bB, a>0, b\geq0$$
with $A$ a nef and big divisor, and $B$ an effective divisor. Then we have
\begin{equation*}
    \H^i(\cN\otimes \omega(L)\otimes \cJ(bB))=0, \text{for } i>0.
\end{equation*}
where $\cJ(bB)$ stands for the multiplier ideal of the $\Q$-divisor $bB$.
\end{coro}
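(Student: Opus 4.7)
The plan is to reduce the corollary to Theorem~\ref{T:KV vanishing} on a log resolution, combining the multiplier-ideal interpretation of the $V$-filtration~(\ref{E:from V to J }) with the projection formula.

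First I would reduce to the stable case. By the Simpson result recalled just before the statement, $\cN$ is a successive extension of stable bundles with vanishing Chern classes; each step being locally free, tensoring by $\omega(L)\otimes\cJ(bB)$ preserves short exact sequences, and induction on the length of the filtration reduces us to $\cN$ stable. Such an $\cN$ carries a harmonic metric and underlies a pure polarizable variation of twistor structures on $X$ with trivial Higgs field.

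Next, choose a log resolution $\pi\colon Y\to X$ simultaneously resolving $(X,B)$ and the base locus of $A$: by Kodaira's lemma, write $mA\sim_\Q H+F$ with $H$ ample, $F$ effective and $m\gg 0$, then resolve $B\cup F$. Set $D:=\pi^{-1}(B\cup F)_{\rm red}$; it is SNC, and write $\pi^*(bB)=\bb' D$. Since $\cN$ is smooth on $X$, $\pi^*\cN$ is smooth on $Y$, and the associated $\sR_Y$-module $\cM'$ (essentially $\pi^*\cN\otimes\omega_Y$ with trivial Higgs field) underlies a pure polarizable mixed twistor $\sD$-module of normal-crossing type along $D$, so $\cM'[*D]$ is $V$-compatible. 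Apply Theorem~\ref{T:KV vanishing} on $Y$, in the notation of the theorem, with $A:=\pi^*H$ (semi-ample; and $A+\be D$ ample for some $\be\geq\0$, because $\pi^*H$ is big and $D$ contains the $\pi$-exceptional locus), $B:=0$, $C:=D$, and with the multi-index coefficient on $C$ chosen so that $\pi^*L$ has the required numerical form and the resulting cut-off on $\cM'^{(0)}(*D)$ coincides (after a small generic perturbation to strict) with $<-\bb'$. This yields, for $i>0$,
$$\H^i\bigl(\Sp_{(Y,D)}\Xi_\Dol\, V^D_{<-\bb'}\cM'^{(0)}(*D)\otimes \pi^*\cO_X(L)\bigr)=0.$$

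Finally, I would descend to $X$. Because the Higgs field on $\cM'$ is trivial, the Spencer complex splits into direct sheaf summands indexed by exterior degree, so the vanishing above holds for the individual summand $V^D_{<-\bb'}\cM'^{(0)}(*D)\otimes\omega_Y\otimes\pi^*\cO_X(L)$. The projection formula combined with~(\ref{E:from V to J }) gives
$$\pi_*\bigl(V^D_{<-\bb'}\cM'^{(0)}(*D)\otimes\omega_Y\bigr) = \cN\otimes\pi_*\bigl(V^D_{<-\bb'}\omega_Y(*D)\bigr) = \cN\otimes\omega_X\otimes\cJ(bB),$$
with the higher $\cR^j\pi_*$ vanishing for $j>0$ by the Grauert--Riemenschneider-type argument underlying multiplier-ideal theory. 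The Leray spectral sequence then yields the desired $\H^i(\cN\otimes\omega(L)\otimes\cJ(bB))=0$ for $i>0$. The principal obstacle is the coefficient bookkeeping on $Y$: making Theorem~\ref{T:KV vanishing}'s positivity hypotheses (semi-ample $A$, ample $A+\be C$, numerical form of $L$) fit simultaneously with the strict multiplier-ideal cut-off $<-\bb'$ requires careful juggling of Kodaira's lemma, the $\pi$-exceptional locus, and a generic perturbation to avoid $V$-filtration jumps.
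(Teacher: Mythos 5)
Your overall strategy (log resolution, Kodaira's lemma, Theorem \ref{T:KV vanishing}, then descent via the split Spencer complex, local vanishing, (\ref{E:from V to J }) and the projection formula) is the same as the paper's, and your descent step is correct. However, the way you feed the data into Theorem \ref{T:KV vanishing} has two genuine problems, and the second is exactly the pitfall that the paper's remark following this corollary warns against.

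First, the positivity hypothesis is not satisfied by your choice $A:=\pi^*H$, $C:=D$. The theorem needs $A+\be C$ ample for some $\be\geq\0$. Since $D$ contains the $\pi$-exceptional divisors, a curve $\gamma$ contracted by $\pi$ and lying in an exceptional component $E_i$ has $\pi^*H\cdot\gamma=0$ and typically $E_i\cdot\gamma<0$, so adding an \emph{effective} combination of components of $D$ cannot make the degree positive on all such curves; ampleness of $\pi^*H$ plus exceptional corrections requires \emph{negative} exceptional coefficients, which the hypothesis $\be\geq\0$ forbids. Second, the $C$-slot of Theorem \ref{T:KV vanishing} produces the non-strict cutoff $V^C_{-\bc}$, whereas (\ref{E:from V to J }) requires the strict cutoff $V^D_{<-\bb'}$; since the $V$-filtration of $\cM'(*D)$ jumps only at integers, these differ precisely on components where the coefficient $b'_i$ is an integer — in particular on every component with $b'_i=0$ one gets $V_0\neq V_{<0}$, i.e.\ logarithmic poles instead of no poles, and the pushforward no longer computes $\cJ(bB)$. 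The "small generic perturbation to strict" you invoke would change the numerical class of $L$ and thereby break the hypothesis $L\equiv aA+\bc C$; this is precisely the "careful juggling" you defer, and it cannot be carried out inside the $C$-slot. The paper's proof avoids both issues by writing $a'A'\equiv_{lin}a^\circ A^\circ+\be E$ with $a^\circ A^\circ$ genuinely \emph{ample} and $\0<\be\ll\1$ (the averaging trick with $m$), taking $C=0$, and putting $B'+E$ entirely into the $B$-slot, which natively carries the strict cutoff; integrality of the $V$-jumps then gives $V^E_{<-\be}=V^E_{<\0}$, so the auxiliary divisor $E$ drops out via (\ref{E:small e V-fil}). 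You should rework the middle of your argument along these lines.
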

$\cJ(bB)$ can also be replaced by $\cJ(b|B|)$ the multiplier ideal of linear series, or $\cJ(||bB||)$ the asymptotic multiplier ideal. Let's refer \cite{Laz04II} for more details. 
\begin{proof}
Take an embedded log-resolution of $X$ and the support of $B$, getting $\pi:X'\to X.$ Set $\pi^*aA=a'A'$ and $\pi^*bB=\bb'B'$. Since $A$ is nef and big, so is $a'A'$. Up to a further log-resolution, we have $a'A'\equiv_{lin}a^\circ A^\circ +\be E,$ with $a^\circ A^\circ$ ample and $\0< \be \ll \1$. (Since we can apply $$a'A'\equiv_{lin}(\frac{a^\circ}{m} A^\circ+\frac{a'(m-1)}{m}A') +\frac{1}{m}\be E,$$
with the $\Q$-divisor in the parentheses being ample.)
To summarize, we have 
$$\pi^*L\equiv_{num}a^\circ A^\circ +\bb'B'+\be E.$$
Set $\cN'=\pi^*\cN$, and it, equipped with the trivial higgs map, still can be lifted as a variation of mixed twistor structures, and we use $\cM'$ to denote its corresponding $\sR$-module. In particular, it is non-characteristic with respect to any smooth divisor, hence the multi-indexed KM-filtration of $\cM'(*(B'+E))$ only jumps at the integers, along each component of $B'$ and $E$, e.g. (\ref{E: multi KM fil of * ext}). 
In the case that $B'$ and $E$ share no common component, we have 
\begin{equation}\label{E:small e V-fil}
    \bV^{B'}_{<-\bb'}\bV^E_{<-\be} \cM'(*(B'+E))\simeq \bV^{B'}_{<-\bb'}\bV^E_{<\0} \cM'(*(B'+E)) \simeq \bV^{B'}_{<-\bb'}\cM'(*B'),
\end{equation}
where the second identity can be checked directly. 
In the case that $B'$ and $E$ share common components, e.g. $B'_0=E_0$, we change the first term by omitting $e_0E_0$ in $E$, and use $b'_0+e_0$ to replace $b'_0$, both as index of KM-filtration and coefficient of $B'_0$. Since $e_0$ is small, it is still isomorphic to the other two terms with index $\bb'$ the original coefficients of $B'$. See also the remark below. Now, due to Theorem \ref{T:KV vanishing}, we have 
$$\H^i(\DR_{(X', B'+E)}(\Xi_{\Dol}\bV^{B'}_{<-\bb'}\bV^E_{<-\be} \cM'(*(B'+E)))\otimes \pi^*\cO(L))=0, i>0.$$
Since we also have $\Xi_{\Dol}\bV^{B'}_{<-\bb'}\cM'(*B')\simeq \cN'\otimes \Xi_{\Dol}\bV^{B'}_{<-\bb'}\omega_{\cX'}(*B')$, and the Higgs connection has set to be trivial, combining (\ref{E:small e V-fil}),
$$\DR_{(X', B'+E)}(\Xi_{\Dol}\bV^{B'}_{<-\bb'}\bV^E_{<-\be} \cM'(*(B'+E)))$$
decomposes as $\bigoplus (\cN'\otimes \wedge^i\cT_{(X, B'+E)} \otimes \Xi_{\Dol}\bV^{B'}_{<-\bb'}\omega_{\cX'}(*B'))[i]$

Due to local vanishing, we have
$$\cR f_*\Xi_{\Dol}\bV^{B'}_{<-\bb'}\omega_{\cX'}(*B')\simeq \cR f_*\omega_X(-\lfloor\bb'\rfloor B')\simeq \cR^0 f_*\omega_X(-\lfloor\bb'\rfloor B') \simeq \omega_X \otimes \cJ(bB),$$
See also \cite{BS05}. (Of course, we can directly apply Theorem \ref{T:relative twistor vanishing}, but it is not necessary.)
Combining the vanishing above and the projection formula, we get the vanishing we need. 
\end{proof}
\begin{remark}
Note that, for the nef and big vanishing, we cannot add the $C$ part in Theorem \ref{T:KV vanishing} in a naive way. The reason is that we cannot identify $\bV^{E}_{\0}$ and $\bV^{E}_{-\be}$ as in (\ref{E:small e V-fil}), even when $\be$ is very small.
\end{remark}
\begin{remark}
The essential point of this version of Nadel vanishing is Hitchin-Kobayashi correspondence. Once we assume that, we can view the semistable vector bundle $\cN$ as extensions of variation of Hodge structures coming from unitary representations, i.e. with trivial Hodge filtration. Hence itself is the lowest filtered piece, and we can apply the vanishing in the setting of Hodge module, e.g. \cite{Wu17}. Another way to achieve the vanishing is to use the fact that $\cN$ is Nakano semi-positive, and we can apply the usual $L^2$ type Nadel vanishing. 
\end{remark}

It is natural to apply the previous Nadel vanishing to get the following effective global generation result, and we follow the exposition in \cite[10.4]{Laz04II}. Please also refer \cite{deC98a}, \cite{deC98b} for a much more comprehensive study on this topic.
\begin{thm}[Theorem of Angehrn and Siu]\label{T:AS thm}
Let X be a smooth projective variety of dimension $n$, with $A$ an ample divisor on it. Fix a point $x\in X$, and assume that, for every irreducible subvariety $Z\subset X$ passing through $x$, (including $X$ itself,) setting $m$ as its dimension, and assume that
$$(A^m\cdot Z)>(\frac{1}{2}n(n+1))^m.$$
Then, for any semistable vector bundle $\cN$ with vanishing Chern classes, $\cN\otimes \omega_X(A)$ is free at $x$, i.e. $\cN\otimes \omega_X(A)$ has a global section that does not vanish at x. 

In particular, if $A\equiv_{num}kL,$ for some $k\geq \frac{1}{2}n(n+1)$ and an ample divisor $L$, then $\cN\otimes \omega_X(A)$ is free, i.e. globally generated.
\end{thm}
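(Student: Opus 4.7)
The plan is to follow the standard Angehrn--Siu argument as presented in \cite[\S10.4]{Laz04II}, using our Nadel-type Corollary \ref{C:Nadel V} in place of classical Nadel vanishing so as to accommodate the factor $\cN$.

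First I would construct an effective $\Q$-divisor $D$ with $D \equiv_{num} cA$ for some $0 < c < 1$, such that $x$ is an isolated point of the cosupport of the multiplier ideal $\cJ(D)$. This is the algebro-geometric core of the Angehrn--Siu method: one produces such a $D$ by an iterative procedure in which each step uses the hypothesis $(A^{m}\cdot Z) > \bigl(\tfrac{1}{2}n(n+1)\bigr)^{m}$, applied to the minimal log-canonical centers $Z$ through $x$, to cut down the dimension of the center until only $\{x\}$ remains. The construction is entirely geometric, uses no vanishing theorems, and is worked out in full in \cite[Theorem~10.4.2]{Laz04II}; I would import it verbatim.

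Once $D$ is in hand, one has $A \equiv_{num} (1-c)A + D$, where $(1-c)A$ is ample, hence nef and big. Corollary \ref{C:Nadel V} then yields
\begin{equation*}
\H^{1}\bigl(X,\ \cN \otimes \omega_X(A) \otimes \cJ(D)\bigr) = 0.
\end{equation*}
Writing $Z \subset X$ for the subscheme defined by $\cJ(D)$, the long exact sequence attached to
\begin{equation*}
0 \to \cN \otimes \omega_X(A) \otimes \cJ(D) \to \cN \otimes \omega_X(A) \to \cN \otimes \omega_X(A) \otimes \cO_Z \to 0
\end{equation*}
produces a surjection $\H^{0}\bigl(\cN \otimes \omega_X(A)\bigr) \twoheadrightarrow \H^{0}\bigl(\cN \otimes \omega_X(A) \otimes \cO_Z\bigr)$. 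Since $x$ is an isolated point of $Z$, the target surjects further onto the fiber of $\cN \otimes \omega_X(A)$ at $x$, so lifting any nonzero vector in that fiber yields a global section of $\cN \otimes \omega_X(A)$ not vanishing at $x$, which is exactly the freeness statement.

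For the global generation conclusion, assume $A \equiv_{num} kL$ with $L$ ample and $k \geq \tfrac{1}{2}n(n+1)$. One checks the numerical hypothesis pointwise: for every irreducible $Z \subset X$ of dimension $m$ passing through any $x$, $(A^{m} \cdot Z) = k^{m} (L^{m} \cdot Z) \geq k^{m} \geq \bigl(\tfrac{1}{2}n(n+1)\bigr)^{m}$, with the boundary case of equality absorbed by perturbing $A$ to $A + \epsilon L$ for arbitrarily small rational $\epsilon > 0$. Applying the first part at every $x \in X$ then gives global generation. The main obstacle is not in the present paper at all: it lies in the Angehrn--Siu construction of $D$, a substantial piece of effective birational geometry. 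The non-abelian Hodge input enters only as the black box Corollary \ref{C:Nadel V}, which plugs in cleanly where classical Nadel vanishing does in the usual argument.
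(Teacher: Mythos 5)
Your proposal is correct and follows essentially the same route as the paper: produce the Angehrn--Siu divisor $D\equiv_{num}cA$, $c<1$, with $x$ an isolated point of the cosupport of $\cJ(D)$ via \cite[10.4]{Laz04I}, apply Corollary \ref{C:Nadel V} to the decomposition $A\equiv_{num}(1-c)A+D$ to kill $H^1$ of $\cN\otimes\omega_X(A)\otimes\cJ(D)$, and conclude by the restriction surjection onto the skyscraper at $x$. Your write-up is, if anything, slightly more explicit than the paper's about how the hypotheses of Corollary \ref{C:Nadel V} are matched.
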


\begin{proof}
According to \cite[10.4.C]{Laz04I}, we are able to find an effective $\Q$-divisor $dD$ on $X$, with $lct(D;x)=d$, and $x$ being $dD$'s isolated LC locus, and $dD\equiv_{num} \lambda A$, for some $\lambda<1$. Due to the vanishing in Corollary \ref{C:Nadel V}, we have
$$H^1(X, \cN\otimes \omega_X(A)\otimes \cJ(dD))=0.
$$
Locally around $x$, $\cJ(dD)$ is just $x$'s ideal sheaf, so we have the surjection of the following natural restriction 
$$H^0(X, \cN\otimes \omega_X(A))\to H^0(X, \cN\otimes \omega_X(A)\otimes C(x)),
$$
where $C(x)$ denoting the one-dimensional sky-scraper sheaf, supported at $x$, and this is what we need.
\end{proof}

\section{Multi-indexed KM-filtration and Logarithmic comparison}
In this section, we would love to generalize some results in \cite{W17a} about logarithmic comparison in mixed Hodge modules setting, to the mixed twistor $\sD$-modules case. 

Let's first recall the definition of the Kashiwara-Malgrange (KM-)filtration on a coherent $\sR_{X(*D)}$-module, with respect to a smooth component $H$ of the reduced normal crossing divisor $D$ on a complex manifold $X$. Since in this paper, given a $\sR$-module $\cM$ on $X$, we will find ourselves only use its information on the open part $X\setminus D$, so it is more natural and easier to consider the KM-filtration on the coherent $\sR_{X(*D)}$-module $\cM(*D):=\cM\otimes \cO_{\cX}(*(p^*D))$ than to consider the KM-filtration on the coherent $\sR_X$-modules like $\cM[*D]$ or $\cM[!D]$, see also Remark \ref{R:shift V fil}.  Actually, $\cM[*D]$ and $\cM[!D]$ themselves are built from the KM-filtration on $\cM(*D)$, as in \cite[3.1]{Moc15}. 

There is a canonical $\Z$-indexed increasing filtration on $\sR_{X}$, stalk-wise defined by, for any $(x, \alpha)\in \cX$,
\begin{align*}
    V^H_k\sR_{X(*D) ,(x,\alpha)} = \{ P\in\sR_{X(*D) ,(x,\alpha)}|& \\
    P\cdot (p^*\cO_X(iH))&_{(x,\alpha)}  \subset p^*(\cO_X(i+kH))_{(x,\alpha)}, \forall i \in\Z\}.
\end{align*}
We note that $V^H_0\sR_{X(*D)}$ is a coherent ring. It can be directly checked that $V^H_k\sR_{X(*D)}=V^H_0\sR_{X(*D)}\otimes \cO_X(kH)$, and  $V^H_\bullet\sR_{X(*D)}$ is a good filtration, \cite[Appendix III]{Bj93}, as a  $V^H_0\sR_{X(*D)}$-module.

Now, we use $\cN$ to denote a strict coherent $\sR_{X(*D)}$-module. For $\forall \alpha\in \C$, we use $\cN^{(\alpha)}$ to denote restricting $\cN$ onto an open subset $X\times \Delta(\alpha, \epsilon)\subset \cX$, for some small $\epsilon>0$, where $\Delta(\alpha, \epsilon)$ is the open disk in $\C_\lambda$, centered at $\lambda=\alpha$ and with radius $\epsilon$.

\begin{definition}\label{D: KM filtration}
Let $\cN$ be a strict coherent $\sR_{X(*D)}$-module. We say that it is \emph{strictly specializable} along $H$, a smooth component of $D$, \cite[2.1.2.2]{Moc15}, if, for $\forall \alpha\in \C$, there exists a \emph{Kashiwara-Malgrange (KM-)filtration} $V^{H}_\bullet$ on $\cN^{(\alpha)}$, (for some $\epsilon>0$,) which is an exhaustive $\R$-indexed increasing filtration by coherent $V^H_0\sR_{X(*D)}$-modules, satisfying the following conditions:
\begin{enumerate}
    \item for all $a\in \R$, locally around any point $P\in \cX^{(\alpha)}$, there exists some $\epsilon>0$ $V^H_a \cN^{(\alpha)}=V^H_{a+\epsilon} \cN^{(\alpha)}$;
    \item each $\sR_{H(*D'|_H)}$-module $\Gr^{V^H}_a \cN^{(\alpha)} := V^H_a /V^H_{<a} \cN^{(\alpha)}$ is strict , where $D'=D-H$;
    \item locally around any point $P\in \cX^{(\alpha)}$,  
    $$V^H_a \cN^{(\alpha)}\cdot t= V^H_{a-1}\cN^{(\alpha)}, \text{for all } a\in \R,$$
    where $t$ is any local holomorphic function on $X$ that defines $H\subset X$.
    \item For any $a \in \R$ and $P \in \cX^{(\alpha)}$, there exists a finite set 
    $$\cK(a,\alpha, P)\subset \{ u\in \R\times \C|\fp(\alpha, u)=a \},$$
    such that
    $$\prod_{u\in \cK(a,\alpha, P)}(t\eth_t+\fe(\lambda, u))
    $$
    is nilpotent on $\Gr^{V^H}_{a}\cN^{(\alpha)}$, where $\fp(\alpha, \bullet):\R\times \C\to \R$ and $\fe(\alpha, \bullet):\R\times \C\to \C$ are functions defined in \cite[\S 2.1]{Moc07a}. (They build a relation between the KMS-spectrum at $\lambda=0$ and $\lambda=\alpha$. See \cite[Corollary 7.71]{Moc07a} for the case on a tame harmonic bundle on the puncture disk.)
\end{enumerate}
\end{definition}
\begin{remark}
The KM-filtration $V^{H}_\bullet$ on $\cN^{(\alpha)}$ is actually unique if exists, \cite[Lemma 22.3.4]{Moc11}.
\end{remark}

Then, we consider the multi-indexed KM-filtration. Let $X$ be a complex manifold, with a reduced normal crossing divisor $D=D_1+...+D_n$, with irreducible components $D_i$.
For any 
$$\mathbf{a}=(a_1,...,a_n)\in \mathbb{R}^n,$$ 
we denote 
\begin{equation}\label{E: def v fil}
    \mathbf{V}_{\mathbf{a}}^{D}\sR_{X(*D)}=\cap_{i} V_{a_i}^{D_i}\sR_{X(*D)}.
\end{equation}

For $\mathbf{a}=\mathbf{0}:=(0,...,0)$, $\mathbf{V}_{\mathbf{0}}^{D}\sR_{X(*D)}$ is a coherent sub-ring of $\sR_X$, denoted by $\sR_{(X,D)}$. 

Let $\cN$ be a coherent right $\sR_{X(*D)}$-module, which is strictly specializable with respect to all $D_i$. In particular, it is the case when $\cM$, a coherent $\sR_X$-module, admits a graded polarizable mixed twistor $\sD$-module, and $\cN=\cM(*D)$.

Define a multi-indexed Kashiwara-Malgrange filtration with respect to $D$ by 
$$\mathbf{V}^D_{\mathbf{a}}\cN^{(\alpha)}=\cap V^{D_i}_{a_i} \cN^{(\alpha)},$$
for any $\mathbf{a}=(a_1,...,a_n)\in \mathbb{R}^n.$ It is not hard to see that $\mathbf{V}^D_{\bullet}\cN^{(\alpha)}$ is a multi-indexed filtered module over the filtered ring $\mathbf{V}_{\bullet}^{D}\sR_X$, and their filtrations are compatible in the sense that
\begin{equation}\label{E:comp with V on R}
    \mathbf{V}^D_{\mathbf{a}}\cN^{(\alpha)}\cdot \mathbf{V}_{\mathbf{b}}^{D}\sR_X(*D)\subset \mathbf{V}^D_{\mathbf{a}+\mathbf{b}}\cN^{(\alpha)},
\end{equation}
for any $\mathbf{a},\mathbf{b}\in \mathbb{R}^n.$

If we separate $D$ into two groups of components $D=B+C$, we will also use $\bV^B_\bb \bV^C_\bc\cN^{(\alpha)}:=\bV^B_\bb\cN^{(\alpha)}\cap \bV^C_\bc\cN^{(\alpha)}.$


Recall that we say $\mathbf{b}< \mathbf{a}$, if $b_i<a_i$ for all $i$. We denote 
\begin{equation}\label{E: def v fil'}
    \mathbf{V}^D_{<\mathbf{a}}\cN^{(\alpha)}:=\cup_{\mathbf{b}<\mathbf{a}} \mathbf{V}^D_{\mathbf{b}}\cN^{(\alpha)}.
\end{equation}

In general, the $n$ filtrations $V^{D_i}_\bullet$ do not behave well between each other. This motivates us to make the following definition, which will be essentially used in the logarithmic comparison, Proposition \ref{P:log-comparison}.
\begin{definition}\label{D:V strict}
Notations as above, we say that $\mathbf{V}^D_\bullet$, the multi-indexed Kashiwara-Malgrange filtration with respect to $D$ on $\cN^{(\alpha)}$ is \emph{V-compatible}, if we have the following strictness relation
\begin{equation}\label{E:strict for x_i}
    \mathbf{V}^{D}_{\mathbf{a}}\cN^{(\alpha)} \xrightarrow{\cdot t_i} \mathbf{V}^{D}_{\mathbf{a}-\mathbf{1}^i}\cN^{(\alpha)}
\end{equation}
are isomorphisms,  for all $\ba\in \R^n$, where
$\mathbf{1}^i:=[0,...,0,1,0,...,0],$
with the only $1$ at the $i$-th position. If locally for any $\alpha\in \C$, $\mathbf{V}^D_\bullet\cN^{(\alpha)}$ is $V$-compatible, we say that such $\sR(*D)$-module $\cN$ is $V$-compatible with respect to $D$. If we further have $\cN=\cM(*D)$ for some coherent $\sR_X$-module $\cM$, then we also say that $\cM$ is $V$-compatible with respect to $D$.
\end{definition}

\begin{remark}\label{R: KM fil on log R mod}
Let $H$ be a smooth component of $D$, and denote $D'=D-H$. If we fix the $D'$ part of the multi-indexed KM-filtration, the $H$ part induces a filtration $V^H_\bullet$ on $\bV^{D'}_{\ba'}\cN^{(\alpha)}$, for a fixed $\ba'\in \R^{n-1}$. Assuming the $V$-compatibility, we can check that such an induced filtration satisfies all conditions in Definition \ref{D: KM filtration}, replacing $\sR_{X(*D)}$ by $\bV^{D'}_\0 \sR_X(*D)$ and it is unique if exits, following the same argument. We may still call it the KM-filtration on a $\bV^{D'}_\0 \sR_X(*D)$-module with respect to $H$.
\end{remark}

As in the filtered $\sD$-module case in \cite[Lemma 12]{W17a}, we have the following
\begin{lemma}\label{L: tor van}
    Let $H$ be a smooth component of $D$ and denote $D'=D-H$. For any right coherent $\sR_{(X,D)}$-module $\cV$, such that it is $t$-torsion free, for any local holomorphic function $t$ on $X$ that locally defines $H$, we have  
    $$\cH^i(\cV\otimes^\bL_{\sR_{(X,D)}}\sR_{(X,D')})=0, \text{ for all } i\neq 0.$$
\end{lemma}
\begin{proof}
Working locally on $X$, we can assume $X=Y\times \C_t$, with $H=Y\times {0}$, and $D'=p_Y^*D^Y$, for some normal  crossing $D^Y\subset Y$. Then, locally we have 
\begin{align*}
    \sR_{(X,D)}&=\sR_{(Y,D^Y)}\left<t, t\eth_t \right>,\\
\sR_{(X,D')}&=\sR_{(Y,D^Y)}\left<t, \eth_t\right>.
\end{align*}
We also consider $\sR_{(X,D)}\left<\xi\right>$, which is, as a left free $\sR_{(X,D)}$-module, isomorphic to the polynomial ring $\sR_{(X,D)}[\xi]$. It also possesses a right $\sR_{(X,D)}$-module structure with the non-commutative relations 
$$[\xi, t]=\lambda,\  [\xi, t\eth_t]=\lambda \xi.
$$
We have the following left $\sR_{(X,D)}$-linear complex
\begin{align}\label{E: res of log}
    \sR_{(X,D)}\left<\xi\right>\xrightarrow{\cdot (t\xi-t\eth_t)} \sR_{(X,D)}\left<\xi\right>&\to \sR_{(X,D')}.\\
    \xi &\mapsto \eth_t \nonumber
\end{align}
Let's show that it actually is a short exact sequence. We give both $\xi, t\eth_t$ and $\eth_t$ of grading $1$, which induces an increasing filtration $F_\bullet$ on $\sR_{(X,D)}\left<\xi\right>$ and $\sR_{(X,D')}$. Both maps in the complex are strict with respect to $F_\bullet$, so to show the complex is exact, we only need to consider the induced complex of their associated graded pieces:
\begin{align*}
    \sR_{(Y,D^Y)}[t, t\eth_t, \xi]\xrightarrow{\cdot (t\xi-t\eth_t)} \sR_{(Y,D^Y)}[t, t\eth_t, \xi]&\to \sR_{(Y,D^Y)}[t, \eth_t],\\
    \xi &\mapsto \eth_t
\end{align*}
which is exact by induction on the grading.
Hence, (\ref{E: res of log}) gives a free resolution of $\sR_{(X,D')}$ as a left $\sR_{(X,D)}$ module.

Now, we only need to argue that 
$\cV\left<\xi\right>\xrightarrow{\cdot (t\xi-t\eth_t)} \cV\left<\xi\right>$ is injective, which can be checked by considering the degree of $\xi$ combined with the $t$-torsion freeness of $\cV$. 
\end{proof}

\begin{prop}
Let $\cM$ be a strict coherent $\sR_X$-module. Assume $\cM(*D)$ is $V$-compatible with respect to $D$, and $H$ is a smooth component of $D$. Denote $D'=D-H$. For any $\alpha\in \C_\lambda$ and $\ba'\in \R^{n-1}$, we have
\begin{align*}   \bV^{D'}_{\ba'}V^H_{0}\cM^{(\alpha)}(*D)\otimes^{\bL}_{\sR_{(X,D)}}\sR_{(X,D')}&\simeq\bV^{D'}_{\ba'}\cM^{(\alpha)}[*H](*D');\\
\bV^{D'}_{\ba'}V^H_{<0}\cM^{(\alpha)}(*D)\otimes^{\bL}_{\sR_{(X,D)}}\sR_{(X,D')}&\simeq \bV^{D'}_{\ba'}\cM^{(\alpha)}[!H](*D').
\end{align*}
\end{prop}
\begin{proof}
Let's first show that,  
\begin{align*}
    V^H_{0}\cM^{(\alpha)}(*D)\otimes^{\bL}_{V^H_0\sR_{X(*D)}}\sR_{X(*D')}&\simeq \cM^{(\alpha)}[*H](*D');\\
    V^H_{<0}\cM^{(\alpha)}(*D)\otimes^{\bL}_{V^H_0\sR_{X(*D)}}\sR_{X(*D')}&\simeq \cM^{(\alpha)}[!H](*D').
\end{align*}
These two identities are due to \cite[Lemma 3.1.2, Lemma 3.1.10]{Moc15}, combining Lemma \ref{L: tor van} above. From now on, we focus on the first identity in the statement of the proposition, since the second one follows similarly. Lemma \ref{L: tor van} will be used repeatedly to show various tensor functors are exact, without being mentioned explicitly.

We want to show the following naturally induced map
\begin{equation}\label{E: Inj of V}
\bV^{D'}_{\ba'}V^H_{0}\cM^{(\alpha)}(*D)\otimes_{\sR_{(X,D)}}\sR_{(X,D')}\to \cM^{(\alpha)}[*H](*D')
\end{equation} 
is injective, for any $\ba'\in \R^{n-1}$. Note that, using a similar resolution as (\ref{E: res of log}), we have
\begin{align*}
    \cM^{(\alpha)}[*H](*D')&\simeq V^H_{0}\cM^{(\alpha)}(*D)\otimes_{V^H_0\sR_{X(*D)}}\sR_{X(*D')}\\
    &\simeq V^H_{0}\cM^{(\alpha)}(*D)\otimes_{\sR_{(X,D)}}\sR_{(X,D')}.
\end{align*}
Now the injectivity of (\ref{E: Inj of V}) can be deduced from the fact that the cokernel of the natural inclusion 
$$\bV^{D'}_{\ba'}V^H_{0}\cM^{(\alpha)}(*D)\to V^H_{0}\cM^{(\alpha)}(*D)$$ 
is $t$-torsion free, due to the $V$-compatibility of $\cM$ in the assumption. In particular, we have the following injection
$$V^{E}_{a}V^H_{0}\cM^{(\alpha)}(*D)\otimes_{\sR_{(X,D)}}\sR_{(X,D')}\to \cM^{(\alpha)}[*H](*D'),
$$
for any component $E\neq H$ of $D$. It is straightforward to check that the filtration on $\cM^{(\alpha)}[*H](*D')$ induced by the image above satisfies all of those conditions in Definition \ref{D: KM filtration}, which means it gives the KM-filtration with respect to $E$. This implies that the multi-indexed filtration induced by the images of (\ref{E: Inj of V}) is indeed the multi-indexed KM-filtration of $\cM^{(\alpha)}[*H]$, with respect to $D'$. It can also be argued directly by using Remark \ref{R: KM fil on log R mod}.
\end{proof}
Apply the previous proposition inductively, we get the following
\begin{prop}[Logarithmic Comparison]\label{P:log-comparison}
With the same assumptions in the previous proposition, separate $D$ into two groups of components $D=B+C$. For any $\alpha\in \C_\lambda$, we have
\begin{equation}
\bV^B_{<\0}\bV^C_{\0}\cM^{(\alpha)}(*D)\otimes^{\bL}_{\sR_{(X,D)}}\sR_X\simeq \cM^{(\alpha)}[!B+*C].
\end{equation}
In particular, we have 
\begin{equation}
    \DR_{(X,D)}\bV^B_{<\0}\bV^C_{\0}\cM^{(\alpha)}(*D)\simeq \DR_X \cM^{(\alpha)}[!B+*C].
\end{equation}
\end{prop}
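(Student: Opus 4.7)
The plan is to adapt the argument of \cite{W17a} (cf.\ \cite[Chapter 3]{Moc07a}) to the mixed twistor setting, reducing the identity to the strict exactness of a local Koszul complex whose acyclicity is exactly what the $V$-compatibility hypothesis provides.

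First, I would localize near a point of $D$ and fix $\lambda=\alpha$, choosing analytic coordinates $(x_1,\ldots,x_N)$ so that $B=\{x_1\cdots x_k=0\}$ and $C=\{x_{k+1}\cdots x_n=0\}$. Note that $\cN:=V^B_{<\0}V^C_{\0}\cM^{(\alpha)}[*D]$ is stable under $\sR_{(X,D)}$, since the log operators $x_i\eth_i$ send each $V^{D_i}_{a_i}$ into itself. The ring $\sR_X$ is obtained from $\sR_{(X,D)}$ by freely adjoining the non-log derivations $\eth_1,\ldots,\eth_n$ along the components of $D$, so there is a standard Koszul-type resolution
\[
0\to \sR_{(X,D)}\otimes\wedge^n V\to \cdots \to \sR_{(X,D)}\otimes V \to \sR_{(X,D)}\to \sR_X\to 0,
\]
where $V=\bigoplus_{i=1}^n \C\cdot \eth_i$, as a complex of left $\sR_{(X,D)}$-modules.

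Tensoring this resolution on the right with $\cN$ produces a Koszul-type complex $K^\bullet(\cN;\eth_1,\ldots,\eth_n)$ representing $\cN\otimes^{\bL}_{\sR_{(X,D)}}\sR_X$. Using the strictness identities \eqref{E:strict for x_i} and \eqref{E:strict for p_i} of Definition \ref{D:V strict}, I would show that this complex is acyclic in positive degrees and that its $H^0$ is exactly $\cM^{(\alpha)}[!B+*C]$. The intuition is that for each $i\leq k$ (a component of $B$) the operator $\eth_i$ strictly propagates the negative-indexed data $V^{D_i}_{<0}$ into the non-negative $V$-degrees and imposes the $[!D_i]$-condition along that component, while for each $i>k$ (a component of $C$) the fact that we started at $V^{D_i}_{0}$ automatically yields the $[*D_i]$-condition on the output once $\eth_i$ is adjoined. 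Because \eqref{E:strict for x_i} and \eqref{E:strict for p_i} decouple component-by-component, the computation can be carried out inductively on $n$, reducing to the one-variable case in each step. This assembles to give exactly the mixed prolongation $\cM^{(\alpha)}[!B+*C]$ in degree zero.

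The second identity follows immediately from the first by applying $-\otimes^{\bL}_{\sR_X}p^*\cO_X$ and invoking associativity of the derived tensor product, which rewrites $\Sp_X\l(\cN\otimes^{\bL}_{\sR_{(X,D)}}\sR_X\r)$ as $\Sp_{(X,D)}\cN$. The main obstacle I anticipate is verifying strictness of the Koszul differentials in the twistor-theoretic sense: the identity \eqref{E:strict for p_i} involves an extra factor of $\lambda$, so one must be careful that the higher cohomology of the Koszul complex genuinely vanishes rather than being only $\lambda$-torsion. This $\lambda$-bookkeeping, absent in the Hodge-module version of \cite{W17a}, is precisely what the $V$-compatibility of Definition \ref{D:V strict} is designed to supply, but the inductive reduction step needs to be checked with some care.
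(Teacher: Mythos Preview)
Your strategy aligns with the paper's, which gives no proof beyond the sentence ``following the same arguments as in \cite{W17a}, see also \cite[Chapter 3]{Moc07a}''; the inductive reduction on components, the use of $V$-compatibility for strictness, and the $\lambda$-bookkeeping you anticipate are indeed the substance of that deferred argument, and your deduction of the second identity from the first via associativity of the derived tensor product is correct.

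There is, however, a concrete error in your implementation. The displayed complex
\[
0\to \sR_{(X,D)}\otimes\wedge^n V\to \cdots \to \sR_{(X,D)}\otimes V \to \sR_{(X,D)}\to \sR_X\to 0
\]
is not a resolution: $\sR_{(X,D)}$ is a \emph{subring} of $\sR_X$, so the last arrow is the inclusion and is not surjective (e.g.\ $\eth_1$ is not in its image). In fact $\sR_X$ is not even finitely generated as a left $\sR_{(X,D)}$-module---already in one variable one needs all the powers $\eth^k$---so no finite free resolution of this shape can exist. What the argument of \cite{W17a} actually does is work with the two explicit Spencer complexes $\Sp_{(X,D)}\cN$ and $\Sp_X\cM[!B+*C]$ (Koszul complexes built from $\wedge^\bullet\cT_{(X,D)}$ and $\wedge^\bullet\cT_X$ respectively), filter by the $V$-filtration along $D$, and use \eqref{E:strict for x_i}--\eqref{E:strict for p_i} to match the associated gradeds term by term; equivalently, one checks that the higher derived tensors of $\cN$ against $\sR_X$ over $\sR_{(X,D)}$ vanish and that the ordinary tensor $\cN\cdot\sR_X\subset\cM(*D)$ recovers $\cM[!B+*C]$. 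The ingredients you list are the right ones, but they feed into this comparison rather than into a resolution of $\sR_X$.
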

Recall the de Rham functor on a right coherent $\sR_X$-module $\cM$ 
$$\DR_{X}\cM= \cM\otimes^{\bL}_{\sR_{X}}\cO_{\cX},$$
which can be explicitly expressed using the Spencer complex $\Sp^\bullet(\cO_\cX)$, \cite[\S 0.6]{Sab05}, as a resolution of $\cO_{\cX}$, by locally free left $\sR_X$-modules.
In the case of right $\sR_{(X,D)}$-modules, we have the log-de Rham functor on a right coherent $\sR_{(X,D)}$-module $\cV$:
$$\DR_{(X,D)}\cV= \cV \otimes^{\bL}_{\sR_{(X,D)}}\cO_{\cX}.$$
We can use the logarithmic Spencer complex in \cite[\S 2]{W17a}, to get the explicit expression. Please note that the $\Sp$ functor in loc. cit. is the same as the de Rham functor here.

Let's also show the following compatibility of the multi-indexed KM-filtration with respect to certain pushforward functor, which has essentially been proved in  \cite[Theorem 3.1.8]{Sab05}. It will be used in the proof of Theorem \ref{T:KV vanishing}.
\begin{prop}\label{P:multi-index V-direct image}
Assume that $f:X\to Y$ is a proper map between complex manifolds, and $F=f\times \text{Id}:X\times \C^n\to Y\times \C^n$. Let $D^X$ (resp. $D^Y$) be a normal crossing divisor on $X\times \C^n$ (resp. $Y\times \C^n$), defined by those coordinates of $\C^n$, and $\cM$ a strict coherent $\sR$-module on $X\times \C^n$, and $\cM(*D^X)$ is $V$-compatible with respect to $D^X$. We have the following compatibility of the direct image functor with multi-indexed KM-filtration:
\begin{equation}\label{E:multi-V comp under direct image}
    \cH^i F_\dagger (\bV^{D^X}_{\bd} \cM^{(\alpha)}(*D^X))\simeq \bV^{D^Y}_{\bd} \cH^iF_\dagger\cM^{(\alpha)}(*D^X), \text{ for any } \bd\in \R^{n}
\end{equation}
where the $F_\dagger$ on the left shall be read as the direct image functor on a $\sR_{(X\times \C^n, D^X)}$-module, \cite[\S 2]{W17a}, see also \cite[Remark 1.4.3(2)]{Sab05}. 
\end{prop}
\begin{proof}
Although Sabbah's theorem only states the case that $D$ has only one component, we can apply it inductively on the number of components of $D$. To be more precise, the inductive assumption implies that     $$\cH^iF_\dagger (\bV^{D'^X}_{\bd'} \cM^{(\alpha)}(*D^X))\simeq \bV^{D'^Y}_{\bd'} \cH^iF_\dagger\cM^{(\alpha)}(*D^X), \text{ for any } \bd'\in \R^{n-1}$$
where $D'^X=D^X-D^X_n$, $D'^Y=D^Y-D^Y_n$.
Now, as in Remark \ref{R: KM fil on log R mod}, $V^{D_n}_\bullet$ gives the KM-filtration on  $\bV^{D'^X}_{\bd'} \cM^{(\alpha)}$, hence we can apply Sabbah's argument to get that $\cH^iF_\dagger (V^{D_n}_\bullet \bV^{D'^X}_{\bd'} \cM)$ gives the KM-filtration on $\bV^{D'^Y}_{\bd'} \cH^iF_\dagger\cM^{(\alpha)}$ with respect to $D^Y_n$.
\end{proof}
Please also refer to \cite[Theorem 3]{W17a}, and the Remark after its proof. In that case, the map is more general, and we cannot reduce to the form of the map $F$ as above. The other results in \cite{W17a} about the direct image and dual functors on log-representations shall still work in the setting of $\sR$-modules admitting graded polarizable mixed twistor $\sD$-modules, by essentially the same arguments. Since we are not using them in this paper, we do not copy them here.

Before we end this section, we consider an easy special case. Let $X$ be a complex manifold with a reduced normal crossing divisor $D$. According to \cite[\S 3.7]{Sab05}, assume a holonomic $\sR_X$-module $\cM$ is \emph{strictly non-characteristic} with respect to all components of $D$. Then, $\cM(*D)$ is strictly specializable and $V$-compatible, and the multi-indexed KM-filtration $\bV^D_\bullet$ on $\cM(*D)$ is globally defined, i.e. does not depend on $\alpha$, and $\bV^D_\bullet$ only jumps at $\Z^n$, with
\begin{equation*}
    \bV^D_{\ba}\cM(*D)=\cM \otimes \cO_{\cX}((\ba+\1)D), \text{for any } \ba\in \Z^n.
\end{equation*}
Let $H$ be one component of $D$ and $D':=D-H$. Following \cite[Lemma 3.1.1]{Moc15}, let's first compute, 
$$V^H_\bullet \cM(*D')[*H]:= V^H_\bullet \cM(*D) \cap \cM(*D')[*H].$$
We claim that
\begin{equation}\label{E: KM fil of nonchar}
    V^H_{k}\cM(*D')[*H]=
    \begin{cases}
    \cM(*D')\otimes \cO_{\cX}((k+1)H), &\text{if } k\leq 0;\\
    \cM(*D')\otimes \cO_{\cX}(H) + ... \\
    \ + \cM(*D')\otimes \cO_{\cX}((k+1)H)\cdot \lambda^k, &\text{if } k\geq 1.
    \end{cases}
\end{equation}
The $k\leq 0$ part is due to \cite[Lemma 3.7.4]{Sab05}. The $k\geq 1$ part is due to that 
$$mt^{-k}\cdot \eth_t =(m\eth_t)t^{-k}+m(-k)t^{-k-1}\lambda, \forall m\in \cM(*D'),
$$
which implies $V^H_{k}\cM(*D')[*H]+V^H_{k}\cM(*D')[*H]\cdot \eth_t= V^H_{k+1}\cM(*D')[*H]$, which is what we need.
Set 
$$\bV^D_{\ba}\cM[*D]:=\bV^D_{\ba}\cM(*D)\cap \cM[*D].$$
By induction, we can get that it only jumps at $\Z^n$, and for any $\ba\in \Z^n,$
\begin{equation}\label{E: multi KM fil of * ext}
    \bV^D_{\ba}\cM[*D]= \sum_{\bk\in \Z^n, \bk\leq \ba}
    \cM\otimes \cO_{\cX}((\bk+\1)D)\cdot \lambda^{s_\bk},
\end{equation}
where 
$$s_\bk:=\sum_{\{i|k_i\geq 1\}} k_i.$$
\begin{remark}\label{R: non-char}
    When $\cM$ is algebraic and holonormic, it is non-characteristic with respect to a general hypersurface, due to Bertini’s theorem, e.g. \cite[\S 14.3.1.3]{Moc15}. If we further assume that it is strictly specializable, then it is strictly non-characteristic.
\end{remark}

\section{Esnault-Viehweg's covering construction}
  On a smooth variety $X$, assume we have a line bundle $\cL$, with a section $s\in H^0(X, \cL^{\otimes N})$ defining an integral divisor $N(\bd D)$, with $D=D_1+...+D_n$ reduced and normal crossing, $\bd\in \Q^n$, $0<d_i\leq 1$, and 
$$\gcd(Nd_1,...,Nd_n,N)=1.$$
Applying the construction in \cite[\S 3]{EV92}, the section $s$ gives a cyclic covering $g:\hat X\to X,$ that only ramifies along $D$, with $\hat X $ being smooth and irreducible, and the ramification number along $D_i$ being $\frac{N}{\gcd(Nd_i,N)}$. If we denote $d_i=\frac{d'_i}{d''_i}$ as reduced fractional, then the ramification number is just $d''_i$.
Let's use $h:\hat X^0\to X^0$ to denote $g$ restricted over $X^0=X\setminus D$, i.e. the \'etale part of $g$.
Fix an $\sR_X$-module $\cM$ that admits a graded polarizable mixed twistor $\sD$-module, assuming that $\cM(*D)$ is $V$-compatible with respect to $D$, and denote $\cM[*D]$, its prolongation along $D$. 
Let $\cM^0=\cM|_{X^0}$, and denote $\hat \cM^0=h^{*}\cM^0$. 
Denote $\hat \cM[*\hat D]$, the prolongation of $\hat\cM^0$ along $\hat D$, which also underlies a mixed twistor $\sD$-module.

Due to the projection formula for $\sR$-modules, \cite[Corollary 1.7.5]{HTT} for the $\sD$-module case, and \cite[Lemma 22.7.1]{Moc11}, on $X^0$ we have 
\begin{equation}\label{E:decomp on X^0}
    h_+ \hat \cM^0=h_+ h^+ \cM^0\simeq \cM^0\otimes h_* \cO_{\hat\sX^0}\simeq \bigoplus_{0\leq i<N}(\cM^0\otimes p^* \cL^{-(i, \bd D)}|_{X^0}),
\end{equation}
where 
$$\cL^{-(i, \bd D)}=(\cL^{(i, \bd D)})^{-1}=(\cL^i(-\lfloor i\bd D\rfloor))^{-1},$$
as in \cite[3.1 Notation]{EV92}. We will just use $(i)$ to replace $(i, \bd D)$, if $\bd D$ is clear from the context. We will also use the notation 
$$\left<x\right>=x-\lfloor x\rfloor,$$
for the fractional part of $x$.

Note that 
$$h_* \cO_{\hat\sX^0}\simeq \bigoplus_{0\leq i<a''} p^* \cL^{-(i)}|_{X^0}$$
shall also be viewed as a decomposition of left $\sR$-modules, that underlies mixed twistor $\sD$-modules, corresponds to the cyclic decomposition of $h_*\C_{\hat X^0}$. We also note that the tensor products in (\ref{E:decomp on X^0}) are between a right $\sR$-module and a left $\sR$-module, and such gives us a right $\sR$-module, as the $\sD$-module case in \cite[Proposition 1.2.9. (ii)]{HTT}. 

Fix $\delta$ such that $0\leq \delta \leq N$. Using the notations in \cite[6.1.5]{Moc07a}, locally around a general point of $D_i$, $p^* \cL^{-(\delta)}|_{X^0}$ underlies the tame harmonic bundle $q_1^*L(-\left< \delta d_i \right>,0)$. See also \cite[3.16 Lemma c)]{EV92}. In particular, after we change it into its corresponding right $\sR$-module, \cite[14.1.2]{Moc07b}, we have 
$$V^{D_i}_{a_i} (\omega_{\sX}\otimes p^* \cL^{-(\delta)}(*D_i))=\omega_{\sX}\otimes p^*\cL^{-(\delta)}(nD_i), $$
if $-\left< \delta d_i \right>+n-1\leq a_i< -\left< \delta d_i \right>+n.
$
Recall that $\omega_{\sX}:=\lambda^{-n}\cdot p^*\omega_X,$ e.g. \cite[Example 14.4]{Moc07b}.

Denote $i:X^0\to X,$ the natural embedding, and denote 
$$ \cM_{\delta}=i_+ (\cM^0\otimes p^* \cL^{-(\delta)}|_{X^0}),$$
which is an $\sR$-module that underlies a mixed twistor $\sD$-module on $X$, satisfying  $  \cM_{\delta}=  \cM_{\delta}[*D].$
Take prolongation of (\ref{E:decomp on X^0}) along $D$, we have 
$$g_+ \hat \cM[*\hat D] \simeq \bigoplus_{0\leq i<N}\cM_{i}.$$

By using the functoriality for the tensor product in \cite[7.2.6]{Moc07b}, we have
\begin{align*}
    V^{D_i}_{a_i}  \cM^{(\alpha)}_{\delta}(*D)= & V^{D_i}_{\left< \delta d_i \right>+a_i}\cM^{(\alpha)}(*D) \otimes V^{D_i}_{-\left< \delta d_i \right>-1} (\omega_{\sX}\otimes p^*\cL^{-(\delta)}(*D)) \otimes  \omega^{-1}_{\sX} \\
    \simeq & V^{D_i}_{\left< \delta d_i \right>+a_i}\cM^{(\alpha)}(*D) \otimes p^* \cL^{-(\delta)}(*(D-D_i)).
\end{align*}
The shifting of degree $1$ in the KM-filtration is due to the convention of the shifting from the parabolic structure to the KM-filtration as in \cite[15.1.2.]{Moc07b}. Twisting $\omega^{-1}_{\sX}$ at the end is to make the tensor product works between right and left modules $\sR$-modules.  
By taking intersections of those KM-filtrations respect to all components of $D$, we get 
\begin{align}\label{E: KM fil of twist line bundle}
    \bV^{D}_{\ba}  \cM^{(\alpha)}_{\delta}(*D)= & \bV^{D}_{\left<\delta \bd \right>+\ba}\cM^{(\alpha)}(*D) \otimes \bV^{D}_{-\left<\delta \bd \right>-\1} (\omega_{\sX}\otimes p^*\cL^{-(\delta)}(*D)) \otimes  \omega^{-1}_{\sX} \\
    \simeq & \bV^{D}_{\left<\delta \bd \right>+\ba}\cM^{(\alpha)}(*D) \otimes p^* \cL^{-(\delta)}.\nonumber
\end{align}
In particular, we have the following
\begin{lemma}\label{L:cyclic computation with V}
In the above setting, $\cM_{\delta}=\cM_{\delta}[*D]$, and $\cM_{\delta}(*D)$ is $V$-compatible with respect to $D$. For any $\alpha\in \C_\lambda,$ 
$$\bV^D_{\delta \bd }\cM^{(\alpha)}(*D)\otimes p^*\cL^{-\delta}\simeq \bV^D_{\left<\delta \bd \right>}\cM^{(\alpha)}(*D)\otimes p^*\cL^{-(\delta)}\simeq \bV^D_{\0}  \cM^{(\alpha)}_{\delta}(*D),$$ 
and
$$\bV^D_{<\delta \bd }\cM^{(\alpha)}(*D)\otimes p^*\cL^{-\delta}\simeq \bV^D_{<\left<\delta \bd \right>}\cM^{(\alpha)}(*D)\otimes p^*\cL^{-(\delta)}\simeq \bV^D_{<\0}  \cM^{(\alpha)}_{\delta}(*D),$$ 
as $\sR_{(X,D)}$-modules.
\end{lemma}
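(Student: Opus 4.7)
The plan is to extract both assertions from the formula
$$V^D_\ba \cM^{\l(\alpha\r)}_\delta\l(*D\r) \simeq V^D_{\l<\delta \bd\r> + \ba} \cM^{\l(\alpha\r)}\l(*D\r) \otimes p^*\cL^{-\l(\delta\r)},$$
which has already been derived in the paragraph immediately preceding the statement. First I will dispatch $V$-compatibility of $\cM_\delta$: the equality $\cM_\delta = \cM_\delta\l[*D\r]$ is built into the construction, since $i_+\l(\cM^0 \otimes p^*\cL^{-\l(\delta\r)}|_{X^0}\r)$ is by definition the canonical $*$-extension of a smooth $\sR$-module on $X^0$. The strictness relations (\ref{E:strict for x_i}) and (\ref{E:strict for p_i}) on $\cM_\delta$ then follow by transporting them from $\cM$ and from $p^*\cL^{-\l(\delta\r)}$ (the latter being $V$-compatible because it underlies a tame pure harmonic bundle, as recalled via \cite[6.1.5]{Moc07a}) through the tensor-product functoriality of KM-filtrations of \cite[7.2.6]{Moc07b}; the boundary cases $a_i = 0$ on $\cM_\delta\l[*D\r]$ and $\cM_\delta\l[!D\r]$ are recovered from $\cM_\delta\l(*D\r)$ by \cite[Lemma 3.1.1 and Lemma 3.1.10]{Moc15}.

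For the first chain of isomorphisms in the lemma, I specialize the above formula to $\ba = \0$, giving
$$V^D_\0 \cM^{\l(\alpha\r)}_\delta\l(*D\r) \simeq V^D_{\l<\delta \bd\r>} \cM^{\l(\alpha\r)}\l(*D\r) \otimes p^*\cL^{-\l(\delta\r)},$$
and invoke Remark \ref{R:shift V fil} (which states $V^D_\bd \cM\l(*D\r) = V^D_\bd \cM\l[*D\r]$ for $\bd \leq \0$) to drop the $\l(*D\r)$ on the left, yielding the rightmost isomorphism. For the leftmost one, I decompose $\delta \bd = \l<\delta \bd\r> + \lfloor \delta \bd \rfloor$ into its fractional and integer parts, apply the shift rule of Remark \ref{R:shift V fil}
$$V^D_{\delta \bd} \cM^{\l(\alpha\r)}\l(*D\r) \simeq V^D_{\l<\delta \bd\r>} \cM^{\l(\alpha\r)}\l(*D\r) \otimes p^*\cO_X\l(\lfloor \delta \bd \rfloor D\r),$$
and absorb $\cO_X\l(\lfloor \delta \bd \rfloor D\r)$ using the elementary identity $\cL^{-\delta} = \cL^{-\l(\delta\r)} \otimes \cO_X\l(-\lfloor \delta \bd D \rfloor\r)$ coming directly from the definition of $\cL^{-\l(\delta\r)}$ in \cite[3.1 Notation]{EV92}.

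The strict-inequality chain is obtained by writing $V^D_{<\ba} = \cup_{\bb<\ba} V^D_\bb$ and noting that filtered unions commute with tensor by the locally free sheaves $p^*\cL^{-\l(\delta\r)}$ and $p^*\cO_X\l(\lfloor \delta \bd \rfloor D\r)$; the three isomorphisms pass to the colimit unchanged. The one place where care is needed is the bookkeeping between $\cL^{-\delta}$ (ordinary tensor power) and $\cL^{-\l(\delta\r)}$ (Esnault--Viehweg's shift by $\lfloor \delta \bd D \rfloor$), but this is entirely formal once Remark \ref{R:shift V fil} is invoked. I do not expect any substantive obstacle: the lemma is in effect a clean restatement at $\ba = \0$ and $\ba < \0$ of the computation that already appears in the body of \S 4.
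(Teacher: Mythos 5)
Your proposal is correct and follows essentially the same route as the paper, which derives the lemma as an immediate specialization (at $\ba=\0$ and its strict variant) of the tensor-functoriality formula $V^D_{\ba}\cM^{\l(\alpha\r)}_{\delta}(*D)\simeq V^D_{\l<\delta\bd\r>+\ba}\cM^{\l(\alpha\r)}(*D)\otimes p^*\cL^{-(\delta)}$ computed just before the statement, with $V$-compatibility obtained exactly as you describe via \cite[Lemma 3.1.1 and Lemma 3.1.10]{Moc15}. Your explicit bookkeeping of $\cL^{-\delta}=\cL^{-(\delta)}\otimes\cO_X(-\lfloor\delta\bd D\rfloor)$ against the integer shift of Remark \ref{R:shift V fil} is the same cancellation the paper leaves implicit.
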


We can also generalize an intermediate result in Saito's proof of Saito vanishing \cite[(2.33.3)]{Sa90}, which will be used in the proof of Theorem \ref{T:twistor vanishing}.
\begin{lemma}\label{L:cyclic computation saito's version}
In the above setting, if we further assume that the $\sR_X$-module $\cM$ is non-characteristic with respect to all components of $D$, then we have that, for any $1\leq \delta \leq N$, $\cM_{\delta}=  \cM_{\delta}[*D]=  \cM_{\delta}[!D]$. Furthermore, we have 
$$\Xi_{\Dol}\cM_{\delta}\simeq \Xi_{\Dol}\cM[*D]\otimes \cL^{-(\delta)} \simeq \Xi_{\Dol}\cM[!D]\otimes \cL^{-(\delta)}(D),$$
as $\cA_X$-modules, where $\cL^{-(\delta)}$ and $\cL^{-(\delta)}(D)$ carry trivial higgs structure, i.e. differential operators act trivially on them. 
\end{lemma}
\begin{proof}
We have 
\begin{equation}\label{E: compare two V0}
    \bV^D_{\0}\cM(*D)\otimes p^*\cL^{-(\delta)}=\bV^D_{\0+\left<\delta \bd \right>}\cM(*D)\otimes p^*\cL^{-(\delta)}\simeq \bV^D_{\0}  \cM_{\delta}(*D),
\end{equation}
where the first identity is due to $\cM$ being non-characteristic with respect to $D$, and the second one is due to the previous lemma. We also have 
$$\bV^D_{\0}\cM(*D)\otimes p^*\cL^{-(\delta)}=\bV^D_{<\0+\left<\delta \bd \right>}\cM(*D)\otimes p^*\cL^{-(\delta)}\simeq \bV^D_{<\0} \cM_{\delta}(*D),$$
which implies $\bV^D_{\0}  \cM_{\delta}(*D)\simeq \bV^D_{<\0} \cM_{\delta}(*D).$
Hence, the first statement holds due to the logarithmic comparison Proposition \ref{P:log-comparison}, since we only need to compare the $\bV^D_{\0}$ and $\bV^D_{<\0}$ part.

For the second statement, according to the previous computation, we note that 
$$\Xi_{\Dol}\bV^D_\0\cM_\delta= \Xi_{\Dol}\bV^D_\0\cM(*D)\otimes \cL^{-(\delta)}=\Xi_{\Dol}\bV^D_{<\0}\cM(*D)\otimes \cL^{-(\delta)}(D).
$$
Due to the logarithmic comparison again and use the explicit resolution (\ref{E: res of log}), 
$\Xi_{\Dol}\cM_{\delta}, \Xi_{\Dol}\cM[*D]\otimes \cL^{-(\delta)}$  and $\Xi_{\Dol}\cM[!D]\otimes \cL^{-(\delta)}(D),$ can be computed from the terms in the previous identities respectively, using a same functor. 
\end{proof}

\begin{remark}
Actually, we have 
$$\cM_{\delta}\simeq \cM[*D]\otimes p^*\cL^{-(\delta)}.$$
Note that $\cM[*D]\otimes p^*\cL^{-(\delta)}$  is \textit{a priori} just a sub-$\sR_{(X, D)}$-module of $\cM_\delta(*D)$, but we will see from the computation below that it is indeed closed under the action of $\sR_X$. However, we cannot identify them to $\cM[!D]\otimes p^*\cL^{-(\delta)}(D)$ naively, since it does not carry a natural $\sR_X$-module structure. 

Let's compute $\bV^D_\bullet \cM_{\delta}:=\bV^D_\bullet\cM_{\delta}(*D)\cap \cM_{\delta}$. We first note that, due to (\ref{E: KM fil of twist line bundle}), $\bV^D_\bullet \cM_{\delta}$ only jumps at $\Z^n+\left<\delta \bd \right>$. Recall (\ref{E: multi KM fil of * ext}) is just the special case when $\delta=0$. We claim that
\begin{equation}\label{E: multi index KM fil with twist}
    \bV^D_{\ba+\left<\delta \bd \right>}\cM[*D]= \sum_{\bk\leq \ba}
    \cM\otimes p^*\cL^{-(\delta)}\otimes \cO_{\cX}((\bk+\1)D)\cdot \lambda^{s_\bk}.
\end{equation}
Similarly, we just need to compute it component by component, which is a more general version of (\ref{E: KM fil of nonchar}). Let $H=D_1$ a component of $D$, $V^H_\bullet \cM_{\delta}:=V^H_\bullet \cM_{\delta}(*D)\cap \cM_{\delta}$. We already know that it only jumps at $\Z+\left<\delta d_1 \right>$. We want to show
\begin{equation}
    V^H_{k+\left<\delta d_1 \right>}\cM_{\delta}=
    \begin{cases}
    \cM(*D')\otimes p^*\cL^{-(\delta)}\otimes \cO_{\cX}((k+1)H), &\text{if } k\leq 0;\\
    \cM(*D')\otimes p^*\cL^{-(\delta)}\otimes \cO_{\cX}(H) + ... \\
    \ + \cM(*D')\otimes p^*\cL^{-(\delta)}\otimes \cO_{\cX}((k+1)H)\cdot \lambda^k, &\text{if } k\geq 1.
    \end{cases}
\end{equation}
The $i\leq 0$ part is due to \cite[Lemma 3.7.4]{Sab05} and (\ref{E: KM fil of twist line bundle}). The $i\geq 1$ part is due to that, for any local sections $m\in  \cM(*D'), l\in p^*\cL^{-(\delta)}$,
\begin{align*}
    (m\otimes l)t^{-k}\cdot \eth_t &=(m\eth_t\otimes l)t^{-k}-(m\otimes(t\eth_tl))t^{-k-1} +m\otimes l (-k)t^{-k-1}\lambda, \\
    &=(m\eth_t\otimes l)t^{-k}-(m\otimes(\left<\delta d_1 \right>\lambda l))t^{-k-1} +m\otimes l (-k)t^{-k-1}\lambda,\\
    &=(m\eth_t\otimes l)t^{-k} -m\otimes l (\left<\delta d_1 \right>+k)t^{-k-1}\lambda.
\end{align*}
This implies $V^H_{k+\left<\delta d_1 \right>}\cM_{\delta}+V^H_{k+\left<\delta d_1 \right>}\cM_{\delta}\cdot \eth_t = V^H_{k+1+\left<\delta d_1 \right>}\cM_{\delta}$, which is what we need. 

Comparing (\ref{E: multi index KM fil with twist}) with (\ref{E: multi KM fil of * ext}), we get $ \bV^D_{\bk+\left<\delta \bd \right>}\cM_{\delta}\simeq \bV^D_{\bk}\cM[*D]\otimes p^*\cL^{-(\delta)}$. 
\end{remark}

\section{Proofs of main theorems}
Although the proof of Theorem \ref{T:twistor vanishing} is mainly motivated from Saito's proof of Saito-Kodaira vanishing, we give it a more geometric and functorial treatment and hopefully provide a clearer picture. Later, we will find that it is also helpful to prove the relative version.

Fix $X$, a smooth algebraic variety of dimension $n$, with a normal crossing reduced divisor $D$, and a line bundle $\cA$. We also fix an algebraic coherent $\sR_X$-module $\cM$, admitting a mixed twistor $\sD$-module, that is $V$-compatible with respect to $D$. We say a sequence of smooth varieties $X=Y_0\supset Y_1\supset Y_2\supset...\supset Y_n$, with $Y_j$ of codimension $j$ in $X$ or $Y_j=\emptyset$, is a filtration induced by $\cA$, if $\cO_{Y_{j-1}}(Y_j)\simeq \cA|_{Y_{j-1}}$. Note that, once $\cA|_{Y_{j}}$ is a trivial line bundle, then $Y_{j+1}=\emptyset,$ so are the successive terms.

Such a filtration is called \emph{normal crossing}, (with respect to $D$,) if $D|_{Y_j}$ is still a normal crossing reduced divisor on $Y_j$, for all $j$. Such a filtration is called \emph{non-characteristic}, (with respect to $\cM$,) if $Y_j$ is non-characteristic with respect to $\cM|_{Y_{j-1}}$, for all $j$. Note that strict specializablity is part of the assumption of mixed twistor $\sD$-module, hence being non-characteristic automatically implies being strictly non-characteristic.
If $\cA$ is base point free, then due to Remark \ref{R: non-char}, we know that, for a generic filtration induced by $\cA$, it is both normal crossing and non-characteristic (NCNC). If we fix a NCNC filtration, it induces an exact sequence of $\sR_X$-modules 
$$0\to \cM\to \cM[*Y_1]\to i_+(\cM|_{Y_1}[*Y_2])\to...\to i_+(\cM_{Y_{n-1}}[*Y_n]) \to i_+ \cM|_{Y_n}\to 0.$$
Once $Y_{j+1}=\emptyset,$ all the successive terms are just $0$ by default. This can be checked directly using \cite[Lemma 3.1.23]{Moc15}, and the explicit computation (\ref{E: multi KM fil of * ext}).

Dually, using \cite[Lemma 3.1.24]{Moc15}, we can similar consider the exact sequence of $\sR_X$-modules 
$$0\to i_+ \cM|_{Y_n}\to i_+(\cM_{Y_{n-1}}[!Y_n]) \to... \to i_+(\cM|_{Y_1}[!Y_2]) \to \cM[!Y_1]\to \cM \to 0.$$

\begin{proof}[Proof for Theorem \ref{T:twistor vanishing}]
We only prove the second vanishing here. Saito's proof also focus on this case, and readers can compare these two approaches. The first vanishing can be derived using the dual construction with little extra effort. Please note that we cannot directly apply Grothendieck-Serre duality here, since we are not working in the derived category of coherent sheaves.

We first set $\cA=\mathcal{L}^m$, for some integer $m$ such that it is base point free. According to the previous construction, we get an exact sequence of $\sR_X$-modules 
\begin{align}
    0\to \cM[!D]\to &\cM[!D+*Y_1]\to i_+(\cM[!D]|_{Y_1}[*Y_2])\to...\\
       &\to i_+(\cM[!D]|_{Y_{n-1}}[*Y_n]) \to i_+ \cM[!D]|_{Y_n}\to 0.\nonumber
\end{align}
Apply $\Xi_{\Dol}$ on each of them, we get an exact sequence of Higgs sheaves. In particular, the Higgs complex is $\cO_X$-linear. Hence, we can twist the complex by $\cL^{-1}$ and keep the exactness. If we view $\cL^{-1}$ as a Higgs bundle with trivial Higgs connection, then the next complex we get is actually an exact sequence of Higgs sheaves:
\begin{equation}\label{E:Higgs affine resolv}
    0\to \Xi_{\Dol}\cM[!D]\otimes \cL^{-1}\to \cE^0 \to \cE^1 \to ... \to \cE^n\to 0,
\end{equation}
with 
$$\cE^j=\Xi_{\Dol}(i_+\cM[!D]|_{Y_j}[*Y_{j+1}])\otimes \cL^{-1}=\Xi_{\Dol}i_+(\cM[!D]|_{Y_j}[*Y_{j+1}]\otimes p^*\cL^{-1}|_{Y_j}).$$
Due to Lemma \ref{L:cyclic computation saito's version}, we get that $\cE^j$ is isomorphic to $\Xi_{\Dol}i_+\cN_{Y_j}[!D|_{Y_j}]$, where $\cN_{Y_j}$ a $\sR_{Y_j}$-module admitting a mixed twistor $\sD$-module, satisfying 
$$\cN_{Y_j}[!D|_{Y_j}]=\cN_{Y_j}[!D|_{Y_j}][!Y_{j+1}]=\cN_{Y_j}[!D|_{Y_j}][*Y_{j+1}].$$
Due to the assumption that  $\cL(dD)$ is ample, hence so is $\cA|_{Y_j}(dD|_{Y_j})=\cO_{Y_j}(dD|_{Y_j}+Y_{j+1})$, which implies that $Z_j:=Y_j\setminus(dD|_{Y_j}+Y_{j+1})_{red}$ is affine. 

By Artin's vanishing, we have 
$$\mathbb{H}^k(i_*\DR_{Y_j}\Xi_{\DR} \cN_{Y_j}[!D|_{Y_j}])=\mathbb{H}^k_c(\DR_{Z_j}\Xi_{DR}\cN_{Y_j}[!D|_{Y_j}]\big|_{Z_j})=0, \text{ for }k< 0.$$
Further, recall that $q$ is the projection $\cX\to \C_\lambda$. We have that $q_+\cN_{Y_j}[!D|_{Y_j}]$ is a free module on $\C_\lambda$, since it shall admit a mixed twistor structure, \cite[Proposition 7.2.7]{Moc15}. 
In particular, $\mathbb{H}^k(\DR_X\Xi_{\Dol}\cN_{Y_j}[!D|_{Y_j}])$ has the same dimension as of  $\mathbb{H}^k(\DR_X\Xi_{\DR}\cN_{Y_j}[!D|_{Y_j}])$. 
Note that this argument is used to replace the classical Hodge-de Rham complex degeneration for proving Kodaira vanishing. 
So we have 
\begin{equation}\label{E:vanishing already got}
\mathbb{H}^k(\DR_X\Xi_{\Dol}\cN_{Y_j}[!D|_{Y_j}])=\mathbb{H}^k(\DR_X\cE^j) =0,\text{ for } k< 0.
\end{equation}
By considering (\ref{E:Higgs affine resolv}), now we can conclude the proof by a standard argument of the degeneration of the Hodge to de Rham spectral sequence, e.g. \cite[Appendix 25]{EV92}
\end{proof}

Then, we prove the relative version.

\begin{proof}[Proof of Theorem \ref{T:relative twistor vanishing}]
We only prove the second vanishing as in the previous proof. The first one follows using the dual construction. Since we can add the pullback of a sufficiently ample line bundle on $S$, we can assume that  $\cL$ is semi-ample on $X$. Hence, as in the proof of Theorem \ref{T:twistor vanishing}, we have an NCNC sequence of varieties 
$$X=Y_0\supset Y_1\supset...\supset Y_n,
$$
with respect to $\cM[!D]$, induced by $\cA=\cL^m$. Then, we have the following exact sequence of Higgs sheaves:
\begin{equation}\label{E:res in relative vanishing}
     0\to \Xi_{\Dol}\cM[!D]\otimes \cL^{-1}\to \cE^0 \to \cE^1 \to ... \to \cE^n\to 0,
\end{equation}
satisfying 
$$\cE^j\simeq \Xi_{\Dol}i_+\cN_{Y_j}[!D|_{Y_j}],
$$
with $\cN_{Y_i}$ are $\sR_{Y_i}$-modules underlying mixed twistor $\sD$-modules, satisfying
$$\cN_{Y_j}[!D|_{Y_j}]=\cN_{Y_j}[!D|_{Y_j}+!Y_{j+1}]=\cN_{Y_j}[(!D|_{Y_j}+*Y_{j+1})].$$

Due to the assumption that  $\cL(dD)$ is $f$-ample, so is $\cA|_{Y_j}(dD|_{Y_j})=\cO_{Y_j}(dD|_{Y_j}+Y_{j+1})$. Set $Z_j:=Y_j\setminus(dD|_{Y_j}+Y_{j+1})_{red}$ is affine, being relative ample implies that the induced $g_j: Z_j\to S$ are affine morphisms. 
Using Artin Vanishing, we get
$$\cH_p^k g_{j,!}(\DR_{Z_j} \Xi_{\DR}(\cN_{Y_i}[!D|_{Y_j}]|_{Z_j}))=0, \text{for } k<0,$$
where $\cH_p^k$ stands for the $k$-th cohomology with respect to the perverse $t$-structure on constructable sheaves. This implies that, using Riemann-Hilbert correspondence, we have
\begin{align}\label{E:vanishing on R}
        \cR^k f_+ i_+ (\cN_{Y_j}[!D|_{Y_j}])&\simeq \cR^k f_\dagger i_\dagger(\cN_{Y_j}[!D|_{Y_j}])\\
        &\simeq \cR^k g_{j,\dagger}(\cN_{Y_i}[!D|_{Y_j}]|_{Z_j})=0, \text{ for } k<0,\nonumber
\end{align}
in the category of coherent $\sR$-modules.
Since we work locally on $S$, we can also assume that $S$ is affine. In the case that $k\geq 0$,  we can apply Artin Vanishing again, (also the discussion before (\ref{E:vanishing already got}),) to get
\begin{align}\label{E:vanishing on H}
    \Xi_{\Dol} \cR^la_\dagger \cR^k f_\dagger i_\dagger(\cN_{Y_j}[!D|_{Y_j}]) &\simeq \Xi_{\DR} \cR^la_\dagger \cR^k f_\dagger i_\dagger(\cN_{Y_j}[!D|_{Y_j}])\\\nonumber
    &\simeq \H_c^l \DR_S\Xi_{\DR}\cR^k g_{j,\dagger}(\cN_{Y_i}[!D|_{Y_j}]|_{Z_j})=0, \text{for } l<0,
\end{align}
where $a$ is the universal map $S\to \text{Spec}(\C)$. 

We have the Grothendieck-Leray spectral sequence: 
$$E^{p,q}_2=\Xi_{\Dol} \cR^p a_\dagger \cR^q f_\dagger i_\dagger (\cN_{Y_j}[!D|_{Y_j}])= \H_c^p \DR_S\Xi_{\Dol}\cR^q g_{j,\dagger}\cN_{Y_j}[!D|_{Y_j}],$$
which converges to
$$E^{p+q}=\H_c^{p+q} \DR_X\Xi_{\Dol}i_\dagger\cN_{Y_j}[!D|_{Y_j}].$$
Now, using (\ref{E:vanishing on R}) and (\ref{E:vanishing on H}), we can get 
$$    E^{k}=\H_c^{k} \DR_X\Xi_{\Dol}i_\dagger\cN_{Y_j}[!D|_{Y_j}]\simeq \H_c^{k} \DR_X \cE^j =0, \text{for } k<0.
$$
Considering (\ref{E:res in relative vanishing}), we can use a standard argument of the degeneration of the Hodge to de Rham spectral sequence as in the proof Theorem \ref{T:twistor vanishing}, to get 
\begin{equation}\label{E:vanishing on pt}
    \H_c^{k} \DR_X\Xi_{\Dol}\cM[!D]\otimes \cL^{-1}=0, \text{for } k<0.
\end{equation}

Note that $a_*=a_![d]$ is an exact functor on the category of quasi-coherent sheaves, and actually it is fully faithful from the category of $\cO_S$-modules to the category of $a_*\cO_S$-modules, \cite[II.Exercise.5.3]{Ha77}. Decomposing $\H_c$ into $a_!f_*$, and combining (\ref{E:vanishing on pt}), we can conclude 
$$\cR^k f_*\DR_X\Xi_{\Dol}\cM[!D]\otimes \cL^{-1}=0,  \text{for } k<-d.$$
\end{proof}

Let's prove the generalized Kawamata-Viehweg vanishing with $\Q$-divisors. 
\begin{proof}[Proof for Theorem \ref{T:KV vanishing}]
According to Remark \ref{R:shift V fil}, we can always assume that $\0\leq \bb <\1, \0\leq \bc <\1$. We will focus on the second vanishing, the other one is similar, by just shifting those indexes to $\0\leq -\bb <\1, \0\leq -\bc <\1$, instead.
We work locally around $\lambda=0$, hence we omit the super-index $(0)$ when taking the KM-filtration.

We first try to reduce numerical equivalence to linear equivalence.
Note that 
$$L \equiv_{num} aA+\bb B +\bc C,$$
is equivalent to that (see e.g. \cite[Remark 1.1.20]{Laz04I}) there exists a topologically trivial line bundle $\cP$, and $n\in \Z^+$, such that
$$\cO(nL)\otimes \cP\simeq \cO(n(aA+\bb B +\bc C)).$$
Apply Bloch–Gieseker coverings on $\cP$ and $(aA+\bb B +\bc C)$ \cite[4.1.10]{Laz04I}. That means there exists a covering map $\pi:\tilde X\to X$, satisfying $\pi^*\cP\simeq \tilde \cP^n,$ for some topologically trivial line bundle $\tilde\cP$ on $\tilde X$, and $\tilde L=\pi^*(aA+\bb B +\bc C)$ is a integral divisor. Actually, $\pi$ can be constructed as a cyclic cover ramified along smooth divisors of sufficient general location. Hence, we can require that $\pi$ is non-characteristic with respect to $\cM(*D)$. In particular, $\pi$ does not ramify along $D$, and $\tilde A (:=\pi^*A), \tilde B, \tilde C$ are reduced and normal crossing, so we can set 
$$\tilde \cM(*\tilde D):=\pi^\dagger \cM(*D)\simeq \pi^* \cM(*D).$$
By a local computation, the KM-filtrations on $\tilde \cM(*\tilde D)$ along the components of $D$ are also just the pull back the corresponding filtrations on $\cM[*D]$. 
$$V^{\tilde B}_{\bb} V^{\tilde C}_{<\bc} \tilde\cM(*\tilde D)\simeq \pi^* V^B_{\bb} V^C_{<\bc} \cM(*D)$$
In particular, $\tilde \cM[*\tilde D]$ is still $V$-compatible with respect to $\tilde D$.

Due to Proposition \ref{P:multi-index V-direct image}, we have
\begin{equation*}
    \pi_\dagger (V^{\tilde B}_{\bb} V^{\tilde C}_{<\bc} \tilde\cM(*\tilde D))\simeq V^B_{\bb} V^C_{<\bc} \pi_\dagger\cM(*D).
\end{equation*}
This implies that $V^B_{\bb} V^C_{<\bc} \cM(*D)$ is a direct summand of $\pi_\dagger (V^{\tilde B}_{\bb} V^{\tilde C}_{<\bc} \tilde\cM(*\tilde D))$.
Due to the compatibility of $\DR$, $\Xi_{\Dol}$ and the direct image functor, and by projection formula, we get 
$$\DR_{(X,D)}(\Xi_{\Dol}V^B_{\bb} V^C_{<\bc} \cM(*D))\otimes \cO_X(-L)$$
is a direct summand of
$$\pi_*(\DR_{(\tilde X,\tilde D)}(\Xi_{\Dol}V^{\tilde B}_{\bb} V^{\tilde C}_{<\bc} \tilde\cM(*\tilde D))\otimes \tilde \cP \otimes \cO_{\tilde X}(-\tilde L)),
$$
where $\tilde L= a\tilde A+\bb \tilde B +\bc \tilde C$, and $\tilde\cP \otimes \cO_{\tilde X}(-\tilde L)\simeq \pi^*\cO_X(-L)$.
Note that $\tilde \cP$, as a topologically trivial line bundle, can be viewed as a stable higgs bundle that is associated to a rank one unitary representation, i.e. a rank one harmonic bundle with trivial higgs connection, so we can use $\tilde \cM\otimes  p^*\tilde\cP$ to replace $\tilde \cM$, still getting a mixed twisor $\sD$-module. Hence we reduce the problem to the case that we replace $\equiv_{num}$ in the initial statement by $\equiv_{lin}$. 

In this setting, we can further apply Bloch–Gieseker coverings with respect to $A$, and we reduce to the case that $aA$ is linear equivalent to some $\Z$-divisor $A'$, hence $\bb B+\bc C$ is also linear equivalent to some $\Z$-divisor $L'$.





Now, we are ready to apply Esnault-Viehweg covering construction.
Denote $\cA=\cO(A')$, $\bb B+\bc C=\bd D$ and $\cL=\cO(\bd D)$.
Let $N$ be the least positive integer such that $N\bd\in \mathbb{Z}^n$, following the construction in \S4, and using Lemma \ref{L:cyclic computation with V}, we get
$$V^B_{\bb} V^C_{<\bc} \cM (*D)\otimes p^*\cL^{-1}\simeq V^B_\0 V^C_{<\0}\cM_1(*D)$$
This implies
\begin{align}\nonumber
&\DR_{(X,D)}(\Xi_{\Dol}V^B_{\bb} V^C_{<\bc} \cM (*D))\otimes \cO_X(-L)\\\nonumber
\simeq  &\DR_{(X,D)}(\Xi_{\Dol}V^B_{\0} V^C_{<\0} \cM_{1}(*D))\otimes \cA^{-1}\\
\simeq  &\DR_{X}(\Xi_{\Dol} \cM _{1}[*B][!C])\otimes \cA^{-1}.\label{E:V twist L id with log comp}
\end{align}
We use the logarithmic comparison (Proposition \ref{P:log-comparison}) to get the second isomorphism.
Now, we can conclude the proof using Theorem \ref{T:twistor vanishing}.
\end{proof}

\begin{proof}[Proof of Theorem \ref{T:relative KV vanishing}]
Once we get the identity (\ref{E:V twist L id with log comp}), we can apply Theorem \ref{T:relative twistor vanishing} to conclude. 
\end{proof}




\bibliographystyle{alpha}
\bibliography{mybib}

\end{document}